\newtheorem{thm}{Theorem}[section]
\newtheorem{cor}[thm]{Corollary}
\newtheorem{lem}[thm]{Lemma}
\newtheorem{prop}[thm]{Proposition}
\newtheorem{rem}[thm]{Remark}
\newtheorem{definition}[thm]{Definition}
\newcommand{\bremark}{\begin{rem} \textup}
\newcommand{\eremark}{\end{rem} }
\newcommand{\pa}{\partial}
\newcommand{\cuad}{{\sqcap\kern-.68em\sqcup}}
\newcommand{\R}{{\mathbb{R}}}
\numberwithin{equation}{section}
\thanks{
AF is partially supported  by ERC-2011-grant: \emph{Epsilon} and by ERC-2013-grant: \emph {COMPAT}\\
BS is partially supported by the
Italian PRIN Research Project 2007: {\em Metodi Variazionali e Topologici
nello Studio di Fenomeni non Lineari}, and is also partially supported by  ERC-2011-grant: \emph{Epsilon}.\\
}
\begin{document}

\parindent 0pc
\parskip 6pt
\overfullrule=0pt

\title[Farina-Sciunzi-2 ]{Monotonicity and symmetry \\
of nonnegative solutions to \\  $ -\Delta u=f(u) $ \\ in  half-planes and strips}


\author[
A. Farina and B. Sciunzi]{A. Farina and B. Sciunzi}
\thanks{Address: {\em AF} --
 LAMFA, CNRS UMR 7352, Universit\'{e} de Picardie Jules Verne, 33, Rue Saint-Leu, 80039 Amiens Cedex 1, France.
 E-mail: {\tt
alberto.farina@u-picardie.fr}. {\em BS} --
Universit\`a della Calabria -- (Dipartimento di Matematica e Informatica) -- V. P.
 Bucci 1-- Arcavacata di Rende (CS), Italy. E-mail: {\tt
 sciunzi@mat.unical.it}.\\}

\thanks{\it 2010 Mathematics Subject
 Classification: 35J61,35B51,35B06}
\maketitle
\begin{flushright}
\emph{A Ireneo con profonda stima e amicizia.}
\end{flushright}

\begin{abstract}
We consider nonnegative solutions to $-\Delta u=f(u)$ in half-planes and strips, under zero Dirichlet boundary condition. Exploiting a rotating$\&$sliding line technique, we prove symmetry and monotonicity properties of the solutions, under very general assumptions on the nonlinearity $f$. In fact we provide a unified approach that works in all the cases $f(0)<0$, $f(0)= 0$ or $f(0)> 0$.
Furthermore we make the effort to deal with nonlinearities $f$ that may be not locally-Lipschitz continuous.
We also provide explicite examples showing the sharpness of our assumptions on the nonlinear function $f$.
\end{abstract}

\section{Introduction and main results} \label{Sec1}

We consider the problem of classifying solutions to
\begin{equation}\label{E:P}
\begin{cases}
-\Delta u=f(u) & \text{ in } \mathbb{R}^2_+,\\
\quad u \ge  0 & \text{ in } \mathbb{R}^2_+,\\
\quad u=0\,\, &\text{ on } \partial\mathbb{R}^2_+.
\end{cases}
\end{equation}
under very general assumptions on the nonlinear function $f$. Here, by $ \mathbb{R}^2_+$, we mean the open half-plane $\{ (x,y)  \in \R^2 : y >0\}$.



Let us recall that, in our two previous works \cite{DS3,FS0}, we proved the following result.

\begin{thm} \label{teo-doppio}
Let $u\in C^2(\overline{\R^2_+})$ be a solution to
\begin{equation}\label{P1-2D}
\begin{cases}
-\Delta u = f(u)& \text{in $\,\, \R^2_+$},\\
u\ge0& \text{in $\,\, \R^2_+$},\\
u=0 & \text{on $\,\, \pa \R^2_+$},
\end{cases}
\end{equation}
with $f$ {\rm locally Lipschitz continuous} on $[0\,, +\infty)$.  Then

\item[i)] if $f(0) \ge 0$, either $u$ vanishes identically, or $u$ is positive on $\R^2_+$ with $ \frac{\partial u}{\partial y} >0$ in $\R^2_+$.

\item[ii)]  if $ f(0) <0$, either $u$ is positive on $\R^2_+$, with  $ \frac{\partial u}{\partial y} >0$ in $\R^2_+$,
or $u$ is one-dimensional and periodic (and unique).

\end{thm}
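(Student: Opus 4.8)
I would prove Theorem~\ref{teo-doppio} by the moving plane method in the $y$--direction, supplemented by a careful analysis of the configuration in which the moving plane ``gets stuck''; the horizontal direction enters only at the end, through a sliding argument.

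\emph{Positivity.} First note that if $u(x_0,y_0)=0$ at an interior point, then $(x_0,y_0)$ is a minimum, so $\Delta u(x_0,y_0)\ge0$ and the equation yields $f(0)=-\Delta u(x_0,y_0)\le0$. Hence, when $f(0)>0$ no interior zero is possible; when $f(0)=0$ one uses $f(s)\ge-Ls$ on $[0,\delta]$, so that $-\Delta u+Lu\ge0$ near any zero and the strong maximum principle forces $u\equiv0$ or $u>0$ in $\R^2_+$. When $f(0)<0$ interior zeros are genuinely admissible, and the one--dimensional periodic solution is precisely such a case. Observe that the statement claims $\partial u/\partial y>0$ only in the \emph{open} half--plane, so no Hopf boundary lemma at $\{y=0\}$ is needed --- which is fortunate, since for $f(0)<0$ the solution detaches quadratically, $u\sim\tfrac{-f(0)}2\,y^2$, and $\partial u/\partial y$ vanishes on $\partial\R^2_+$.

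\emph{Moving planes.} For $\lambda>0$ put $\Sigma_\lambda=\R\times(0,\lambda)$, $u_\lambda(x,y)=u(x,2\lambda-y)$ and $w_\lambda=u_\lambda-u$, so that $w_\lambda\ge0$ on $\partial\Sigma_\lambda$ and $-\Delta w_\lambda=c_\lambda w_\lambda$ in $\Sigma_\lambda$, with $c_\lambda$ controlled on slabs by the local Lipschitz character of $f$ (together with an a priori bound on $u$ in slabs, if needed). One shows $w_\lambda\ge0$ in $\Sigma_\lambda$ for all small $\lambda>0$ via a maximum principle in \emph{unbounded} narrow strips --- comparing $w_\lambda$ with $\sin(\pi y/\lambda)$ and excluding $w_\lambda\to-\infty$ as $|x|\to\infty$ --- and then sets $\Lambda=\sup\{\lambda>0:\ w_\mu\ge0\text{ in }\Sigma_\mu\ \text{for every }\mu\in(0,\lambda]\}$. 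If $\Lambda=+\infty$, then $u(x,y)\le u(x,2\lambda-y)$ for all $0<y<\lambda$ and all $\lambda$, i.e. $\partial u/\partial y\ge0$ in $\R^2_+$; the strong maximum principle applied to $v=\partial u/\partial y$, which satisfies $-\Delta v+Cv\ge0$ and is $\ge0$, upgrades this to $\partial u/\partial y>0$ and hence $u>0$ in $\R^2_+$. If $\Lambda<+\infty$, then $w_\Lambda\ge0$ in $\Sigma_\Lambda$ by continuity, and the strong maximum principle gives either $w_\Lambda>0$ in $\Sigma_\Lambda$, in which case Hopf's lemma on $\{y=\Lambda\}$ plus a translation/compactness argument along the $x$--axis lets one move the plane slightly past $\Lambda$, contradicting maximality, or $w_\Lambda\equiv0$, i.e. $u$ is symmetric about $\{y=\Lambda\}$.

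\emph{The stuck case.} Symmetry about $\{y=\Lambda\}$ forces $u=0$ on $\{y=2\Lambda\}$ and $\partial u/\partial y(x,2\Lambda)=-\partial u/\partial y(x,0)$; since $u\ge0$ just above $\{y=2\Lambda\}$ this derivative is $\ge0$, while monotonicity up to $\Lambda$ gives $\partial u/\partial y(x,0)\ge0$, so both vanish and $u=\partial u/\partial y=0$ on $\{y=0\}$. In particular $u$ has interior zeros, so (by the positivity step) this case occurs only for $f(0)<0$. Using the horizontal translation invariance of $\R^2_+$ and the slab bounds, a moving plane / sliding argument in the $x$--direction then shows $u$ is independent of $x$, so $u=u(y)$ solves $-u''=f(u)$ on $(0,\infty)$ with $u\ge0$, $u(0)=u'(0)=0$. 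Conservation of the energy $\tfrac12(u')^2+F(u)$, $F'=f$, $F(0)=0$, pins this down: the trajectory through the origin has zero energy, $u$ rises to the first positive zero of $F$ and returns to $0$ with zero slope, and iterating gives periodicity; since $u'(0)\ne0$ would force $u<0$ past a zero, the periodic profile is unique. This yields ii), and also completes i), where the stuck case cannot occur.

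\emph{Main obstacle.} All the real difficulties come from the unboundedness of the domain: obtaining the a priori control of $u$ on slabs needed to bound $c_\lambda$; the maximum principle in \emph{unbounded} narrow strips together with the control of $w_\lambda$ as $|x|\to\infty$; and the translation/compactness arguments required both to push the plane past any non--stuck position and to carry out the horizontal sliding. The subtlest step --- and the one where two--dimensionality is genuinely used --- is the $x$--independence of $u$ in the stuck configuration; one must also bear in mind that for $f(0)<0$ there is no Hopf lemma at the zeros of $u$, so the usual strong--maximum--principle shortcuts fail there.
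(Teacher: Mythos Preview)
Your outline follows the ``classical'' horizontal moving--plane scheme, and the positivity step, the ODE analysis in the stuck case, and the overall logical architecture are fine. But the crucial technical ingredients you flag under ``Main obstacle'' are not just difficult --- in the generality of the statement they are not available, and this is precisely the gap. You need, simultaneously, a bound on $c_\lambda$ in the \emph{unbounded} strip $\Sigma_\lambda$ (hence a bound on $u$ there), a maximum principle in that unbounded strip, control of $w_\lambda$ as $|x|\to\infty$, and a translation/compactness mechanism both to start and to continue the procedure. None of these follows from the hypotheses: $u$ is not assumed bounded, $f$ is only \emph{locally} Lipschitz, and no growth condition is imposed. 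The comparison with $\sin(\pi y/\lambda)$ presupposes a uniform bound on $c_\lambda$, the compactness step presupposes local uniform estimates along $x$--translations, and the horizontal sliding in the stuck case faces the same obstacles. So as written the argument does not close.

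The paper takes a genuinely different route designed to bypass exactly this. Instead of reflecting across horizontal lines in unbounded strips, it reflects across \emph{tilted} lines $L_{x_0,s,\theta}$ (slope $\tan\theta$, $\theta\in(0,\pi/2)$) and works in the \emph{bounded} triangle $\mathcal T_{x_0,s,\theta}$ cut out by $L_{x_0,s,\theta}$, $\{y=0\}$ and $\{x=x_0\}$. On such triangles $u$ is automatically bounded, so a weak comparison principle in domains of small \emph{measure} applies without any a priori estimate. A rotating--then--sliding continuation (Lemmas on small perturbations and on the sliding--rotating technique) pushes the tilted line up; letting $\theta\to 0^+$ for each fixed $x_0$ recovers the horizontal comparison $u\le u_\lambda$ on $\{x\le x_0\}$, and since $x_0$ is arbitrary one gets it on all of $\Sigma_\lambda$. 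No narrow--strip principle, no compactness, no bounds on $u$ are needed. For the stuck case the paper does \emph{not} slide in $x$: from $u=\nabla u=0$ on $\{y=0\}$ it invokes the \emph{unique continuation principle} (here the local Lipschitz continuity of $f$ is essential) to conclude $u(\cdot+t,\cdot)\equiv u$ for every $t$, hence $u=u(y)$, after which the periodic one--dimensional profile is read off from the ODE. Your sketch would become a valid alternative only under an additional boundedness assumption on $u$ (or something equivalent); in full generality, the rotating--line device is what makes the proof go through.
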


The case $f(0) \ge 0$ was first treated in \cite{DS3} (in which also the case of the $p$-Laplace operator is considered) while the case $f(0) <0$ was carried out later, in the  paper \cite{FS0}.  Both of them are  based on a refined version of the \emph{moving plane method} \cite{S} (see also \cite{BN,GNN}). More precisely, the authors of \cite{DS3} exploit a
\emph{rotating line technique} and a \emph{sliding line technique}, while in \cite{FS0} we have used a refinement of these techniques combined with the \emph{unique continuation principle}, needed to handle the new, challenging and difficult case of nonnegative solutions.
The techniques developed in \cite{DS3,FS0} also provided an affirmative answer to a conjecture and to an open question posed by Berestycki, Caffarelli and Nirenberg in \cite {BCN1, BCN2}.




\noindent In this paper we first give a (new) unique/unified proof to  Theorem \ref{teo-doppio} and at the same time we make the effort to deal with the case of continuous nonlinearities $f$ that fulfills very weak and general regularity assumptions, i.e., less regular than locally Lipschitz continuous. See assumptions ($h_1$)-($h_4$) in Section 2.

In this direction we have the following results.


\begin{thm}\label{casteorem}
Let $u\in C^2(\overline{\mathbb{R}^2_+})$ be a solution to
 \begin{equation}\label{P1-2D-pos}
\begin{cases}
-\Delta u = f(u)& \text{in $\,\, \R^2_+$},\\
u>0& \text{in $\,\, \R^2_+$},\\
u=0 & \text{on $\,\, \pa \R^2_+$},
\end{cases}
\end{equation}
and assume that either

\item[i)] $f(0) =0$ and $f$ fulfills ($h_1$),  ($h_2$) and ($h_3$),

or

\item[ii)] $f(0)\neq0$ and $f$ fulfills ($h_1$),  ($h_2$) and ($h_4$).

Then
\[
\frac{\partial u}{\partial y} >0 \qquad\text{in }\quad\mathbb{R}^2_+\,.
\]
\end{thm}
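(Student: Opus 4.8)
The plan is to run the moving plane (moving line) method with respect to the horizontal lines $T_\lambda=\{y=\lambda\}$, $\lambda>0$. For $\lambda>0$ set $\Sigma_\lambda=\{(x,y)\in\R^2_+:0<y<\lambda\}$, let $(x,y)_\lambda=(x,2\lambda-y)$ be the reflection of $(x,y)$ across $T_\lambda$, and put $u_\lambda(x,y)=u((x,y)_\lambda)$ and $w_\lambda=u-u_\lambda$. Since $u_\lambda$ solves the same equation, $w_\lambda$ solves $-\Delta w_\lambda=f(u)-f(u_\lambda)$ in $\Sigma_\lambda$, with $w_\lambda=0$ on $T_\lambda$ and $w_\lambda(x,0)=-u(x,2\lambda)<0$. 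The goal is to prove
\[
(\star_\lambda):\qquad u\le u_\lambda\ \text{ in }\Sigma_\lambda,\qquad\text{for every }\lambda>0.
\]
Granting $(\star_\lambda)$ for all $\lambda$, fixing $x_0$ and $0<y_1<y_2$ and taking $\lambda=\tfrac{y_1+y_2}{2}$ gives $u(x_0,y_1)\le u(x_0,y_2)$, so $\partial u/\partial y\ge 0$ in $\R^2_+$; to upgrade this to $\partial u/\partial y>0$ one wants, for each $\lambda>0$, the strong comparison statement $w_\lambda<0$ in the interior of $\Sigma_\lambda$ followed by a Hopf-type inequality for $-w_\lambda$ at $T_\lambda$, which — since $\partial_y(-w_\lambda)\big|_{y=\lambda}=-2\,\partial u/\partial y$ — yields $\partial u/\partial y>0$ on $\{y=\lambda\}$. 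Here lies the main novelty of the statement: as $f$ need not be locally Lipschitz, neither $(\star_\lambda)$ nor the strong comparison and Hopf steps are immediate from the off-the-shelf maximum principles (indeed the strong minimum principle already fails for nonnegative subsolutions of $\Delta+(\text{positive constant})$). The hypotheses ($h_1$)--($h_4$) are precisely what fixes this: roughly speaking, away from the boundary value $0$ — that is, at interior points of $\R^2_+$ and along $T_\lambda$ for $\lambda>0$, where a positive solution stays locally bounded away from $0$ — they grant enough control on $f$ to apply the classical strong maximum principle and Hopf lemma to $w_\lambda$, while near the value $0$ — where the required condition is ($h_3$) if $f(0)=0$ and ($h_4$) if $f(0)\ne 0$ — they provide the one-sided bound on the difference quotients of $f$ needed to get the moving plane started; altogether they make available, for $w_\lambda$, a weak maximum principle in narrow (possibly unbounded) strips, the strong maximum principle, and the Hopf boundary lemma.

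Everything thus reduces to $(\star_\lambda)$ for all $\lambda>0$, which I would prove in two steps. \emph{Getting started.} Since $u\in C^2(\overline{\R^2_+})$, $u\ge 0$ and $u=0$ on $\pa\R^2_+$, one has $w_\lambda\le 0$ on all of $\pa\Sigma_\lambda$; using the one-sided control on the increments of $f$ provided by ($h_1$)--($h_4$), $w_\lambda$ satisfies on the set $\{w_\lambda>0\}$ a differential inequality $-\Delta w_\lambda\le \bar c\,w_\lambda$ with $\bar c$ bounded. The maximum principle in narrow domains — applicable because $\Sigma_\lambda$ lies in a slab of width $\lambda$ and $w_\lambda$ is bounded (the latter coming from the a priori bounds on $u$ encoded in ($h_2$)) — then forces $\{w_\lambda>0\}=\emptyset$ for all sufficiently small $\lambda>0$, i.e. $(\star_\lambda)$ holds for $\lambda$ small. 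It is exactly the behaviour of $f$ near $0$ that enters here, so that the needed one-sided bound (hence the distinction ($h_3$) vs. ($h_4$) according to the sign of $f(0)$), and, where these do not by themselves suffice, a preliminary comparison along lines slightly tilted away from the horizontal (the ``rotating'' ingredient) and/or the unique continuation principle as in \cite{FS0}, let the argument go through without local Lipschitz continuity. I expect this initial step, in the cases $f(0)\le 0$ and with $f$ merely continuous, to be the main obstacle of the proof.

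\emph{Continuation.} Set $\Lambda=\sup\{\lambda>0:(\star_\mu)\text{ holds for all }0<\mu\le\lambda\}\in(0,+\infty]$; the claim is $\Lambda=+\infty$, which together with the first paragraph finishes the proof. Assume instead $\Lambda<+\infty$. By continuity $(\star_\Lambda)$ holds, so $w_\Lambda\le 0$ in $\Sigma_\Lambda$; the strong maximum principle (available by ($h_1$)--($h_4$), since $u$, hence $u_\Lambda$, is locally bounded away from $0$ on the open strip) gives either $w_\Lambda\equiv 0$ — impossible, as it would force $u(x,2\Lambda)=0$ against $u>0$ in $\R^2_+$ — or $w_\Lambda<0$ in the interior of $\Sigma_\Lambda$. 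One must now push the line a little past $\Lambda$; if this failed, there would exist $\lambda_n\downarrow\Lambda$ and points $P_n=(x_n,y_n)\in\Sigma_{\lambda_n}$ with $w_{\lambda_n}(P_n)>0$. Since $\R^2_+$ is unbounded only in the $x$-direction, I would translate horizontally, $u^n(x,y):=u(x+x_n,y)$, use the a priori bounds under ($h_2$) and standard elliptic estimates to extract a subsequence converging locally uniformly (together with its gradient) to a nonnegative solution $u^\infty$ of $-\Delta u^\infty=f(u^\infty)$ with $u^\infty=0$ on $\pa\R^2_+$, and observe that $u^\infty$ still satisfies $(\star_\mu)$ for every $\mu\le\Lambda$ while — for a suitable choice of the $P_n$, so that the defect is not lost in the limit — $u^\infty\not\equiv 0$ and the corresponding $w^\infty_\Lambda$ touches $0$ at an interior point of $\Sigma_\Lambda$. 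The strong maximum principle then forces $w^\infty_\Lambda\equiv 0$, i.e. $u^\infty\equiv u^\infty_\Lambda$; excluding the resulting symmetry/periodicity — which, when $f(0)<0$, is the delicate instance handled via the unique continuation principle and the strict positivity of $u$ as in \cite{FS0} — yields the contradiction. Hence $\Lambda=+\infty$, $(\star_\lambda)$ holds for all $\lambda>0$, and the theorem follows. The truly delicate point of this step is the compactness argument in the unbounded strip, in particular guaranteeing that the defect $w_{\lambda_n}(P_n)>0$ persists in the limit; alternatively one may avoid translations by applying a quantitative narrow-region maximum principle on the thin strips $\Sigma_{\Lambda+\ve}\setminus\Sigma_{\Lambda-\de}$.
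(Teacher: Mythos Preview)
Your horizontal moving-plane scheme has a genuine gap: nothing in the hypotheses forces $u$ to be bounded on $\R^2_+$, and without boundedness your two main tools collapse. First, the constant $L_b$ in $(h_2)$ depends on an upper bound $b$ for $u$ and $u_\lambda$; on the unbounded strip $\Sigma_\lambda$ there is no uniform such $b$, so you cannot write $-\Delta w_\lambda\le \bar c\,w_\lambda$ on $\{w_\lambda>0\}$ with a single $\bar c$. (Your phrase ``a priori bounds on $u$ encoded in $(h_2)$'' is a misreading of that hypothesis: $(h_2)$ is a one-sided Lipschitz condition and gives no pointwise control on $u$.) Second, even with a uniform $\bar c$, the maximum principle in narrow or small-measure domains requires $w_\lambda$ to be bounded, which again you do not have. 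The same obstruction kills both versions of your continuation step: the translation--compactness argument needs local uniform bounds on $u$ to extract a convergent subsequence via elliptic estimates, and the ``quantitative narrow-region'' alternative is a comparison on the \emph{unbounded} set $\Sigma_{\Lambda+\ve}\setminus\Sigma_{\Lambda-\de}$, hence subject to exactly the same failure.

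The paper's method is designed precisely around this obstacle, and it is genuinely different from what you outline. Instead of reflecting across horizontal lines and working on unbounded strips, one fixes $x_0\in\R$, reflects across a \emph{tilted} line $L_{x_0,s,\theta}$ through $(x_0,s)$ of slope $\tan\theta$, $\theta\in(0,\tfrac{\pi}{2})$, and works on the \emph{bounded} triangle $\mathcal T_{x_0,s,\theta}$ cut off by $\{x=x_0\}$, $\{y=0\}$ and the line. On such a triangle $u$ is automatically bounded, so the weak comparison principle in domains of small measure (Proposition~\ref{maxpri}) applies and one can both start and continue the procedure by rotating (decreasing $\theta$) and sliding (increasing $s$) --- this is the content of Lemmas~\ref{smallperturbations}--\ref{caffbe}. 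Letting $\theta\to 0^+$ one recovers, for each fixed $x_0$, the inequality $u\le u_\lambda$ on $\Sigma_\lambda\cap\{x\le x_0\}$ (and, by the mirror argument, on $\{x\ge x_0\}$); since $x_0$ is arbitrary and $u>0$ forces $\lambda^*=+\infty$, one gets $u<u_\lambda$ on every $\Sigma_\lambda$ and then $\partial_y u>0$ via the Hopf-type argument you correctly sketched. You do mention the rotating ingredient in passing for the starting step, but here it is not an auxiliary trick: it is the whole mechanism, and it is equally indispensable in the continuation, because it is what replaces unbounded strips by bounded triangles throughout.
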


\bigskip

We observe that the conclusion of  Theorem \ref{casteorem} is not true if we drop the assumption $(h_3)$ in item i).  This will be discussed in the last section of the paper.

\bigskip

Let us point out that, with the same technique, we can also prove a symmetry and monotonicity result in strips, for possibly unbounded solutions.
More precisely, with the notation $\Sigma_{2b}\,:= \{ (x,y)  \in \R^2 : y \in  (0\,,\,2b) \}$, $b >0$, we have the following:
\begin{thm}\label{T:3spe}
Let $u\in C^2(\overline{\Sigma_{c}})$ for any $c<2b$ be a   solution to
\begin{equation}\label{E:Pstrip}
\begin{cases}
-\Delta u=f(u), & \text{ in } \Sigma_{2b}\\
\quad u > 0, & \text{ in } \Sigma_{2b}\\
\quad u=0,\,\, &\text{ on } \{y=0\} \,
\end{cases}
\end{equation}

and assume that either

\item[i)] $f(0) =0$ and $f$ fulfills ($h_1$),  ($h_2$) and ($h_3$),

or

\item[ii)] $f(0)\neq0$ and $f$ fulfills ($h_1$),  ($h_2$) and ($h_4$).

 Then
 \[
\frac{\partial u}{\partial y} >0 \qquad\text{in }\quad\Sigma_{b}\,.
\]
If $u\in C^2(\overline{\Sigma_{2b}})$
and $u=0$ on $\partial \Sigma_{2b}$, then
 $u$ is  symmetric about $\{y=b\}$.
\end{thm}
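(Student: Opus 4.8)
The plan is to run the moving-plane (here: moving-line) method in the $y$-direction on the strip $\Sigma_{2b}$, reflecting about horizontal lines $\{y=\lambda\}$ for $\lambda\in(0,b]$. For $\lambda\in(0,b]$ set $\Sigma_\lambda:=\{(x,y):0<y<\lambda\}$, let $(x,y)_\lambda:=(x,2\lambda-y)$ be the reflection about $\{y=\lambda\}$, and define $u_\lambda(x,y):=u((x,y)_\lambda)$ on $\Sigma_\lambda$ together with $w_\lambda:=u_\lambda-u$. Since $-\Delta u=f(u)$ and $-\Delta u_\lambda=f(u_\lambda)$, the difference satisfies $-\Delta w_\lambda=f(u_\lambda)-f(u)=c_\lambda(x,y)\,w_\lambda$ where, wherever $w_\lambda\neq0$, $c_\lambda=\frac{f(u_\lambda)-f(u)}{u_\lambda-u}$; the weak regularity assumptions $(h_1)$--$(h_4)$ are exactly what is needed so that this linearized coefficient is controlled (bounded from above, or with the right sign near the boundary) well enough to apply the maximum principle and the Hopf lemma in the narrow strips $\Sigma_\lambda$ — this is the same machinery that underlies Theorem~\ref{casteorem}, and I would quote the relevant lemmas from Section~2 rather than reprove them. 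On $\partial\Sigma_\lambda$ we have $w_\lambda=0$ on $\{y=\lambda\}$ and $w_\lambda=u_\lambda-0=u_\lambda\geq0$ on $\{y=0\}$ (using $u>0$ inside and $u=0$ on $\{y=0\}$, with $u_\lambda$ evaluated at the reflected point which lies strictly inside $\Sigma_{2b}$); so $w_\lambda\geq0$ on all of $\partial\Sigma_\lambda$.

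The core argument is then: (i) \emph{starting the procedure} — for $\lambda$ small the strip $\Sigma_\lambda$ is narrow, so by the narrow-domain maximum principle (small-measure version, which only needs an $L^\infty$-upper bound on $c_\lambda$ — guaranteed near $y=0$ since $u$ is small there and the $(h_i)$ give the needed modulus-of-continuity control) we get $w_\lambda\geq0$ in $\Sigma_\lambda$; (ii) \emph{defining} $\bar\lambda:=\sup\{\lambda\in(0,b]:w_\mu\geq0\text{ in }\Sigma_\mu\ \forall\mu\in(0,\lambda]\}$ and showing $\bar\lambda=b$; (iii) for $\lambda\leq\bar\lambda$, by the strong maximum principle either $w_\lambda\equiv0$ or $w_\lambda>0$ in the interior of $\Sigma_\lambda$ — the first alternative can be ruled out for $\lambda<b$ by the boundary data (on $\{y=0\}$, $w_\lambda=u_\lambda>0$), and even at generic $\lambda$ one argues $w_\lambda>0$ in $\mathring\Sigma_\lambda$; (iv) \emph{continuation} — if $\bar\lambda<b$, take $\lambda_n\downarrow$ no, $\lambda_n\uparrow\bar\lambda$ is automatic by definition, so one shows the inequality persists slightly past $\bar\lambda$: write $\Sigma_{\bar\lambda+\delta}=K\cup(\Sigma_{\bar\lambda+\delta}\setminus K)$ with $K$ a large compact subset of $\Sigma_{\bar\lambda}$ where $w_{\bar\lambda}\geq\eta>0$ by (iii) and the Hopf lemma at $\{y=0\}$, and $\Sigma_{\bar\lambda+\delta}\setminus K$ of small measure so the narrow-domain principle applies there; by continuity in $\lambda$ this gives $w_{\bar\lambda+\delta}\geq0$, contradicting maximality. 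Hence $\bar\lambda=b$, i.e.\ $w_\lambda\geq0$ in $\Sigma_\lambda$ for every $\lambda\in(0,b]$.

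From $w_\lambda\geq0$ in $\Sigma_\lambda$ for all $\lambda\le b$ one extracts monotonicity in the standard way: fixing $(x,y)\in\Sigma_b$ and differentiating, $w_\lambda(x,y)\ge0$ for all admissible $\lambda$ together with $w_y$-type comparisons gives $\frac{\partial u}{\partial y}\geq0$ in $\Sigma_b$; then the strong maximum principle applied to the equation $-\Delta(\partial_y u)=f'(u)\,\partial_y u$ — or, when $f$ is merely continuous, a limiting/linearization argument using that $\partial_y u$ is a nonnegative supersolution of a linear equation with locally bounded coefficient, which is where the lack of Lipschitz regularity must be handled with care — upgrades this to $\frac{\partial u}{\partial y}>0$ in $\Sigma_b$, which is the first conclusion. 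For the symmetry statement, assume additionally $u\in C^2(\overline{\Sigma_{2b}})$ with $u=0$ also on $\{y=2b\}$; then the whole moving-line argument can be run \emph{from the top as well}, reflecting about lines $\{y=\lambda\}$ with $\lambda\in[b,2b)$ descending, yielding $w_\lambda\leq0$, equivalently the reversed monotonicity from the top, hence $\frac{\partial u}{\partial y}\le0$ on $\Sigma_{2b}\setminus\overline{\Sigma_b}$; combined with $\frac{\partial u}{\partial y}\ge 0$ on $\Sigma_b$ one gets at $\lambda=b$ that $w_b\ge0$ and $w_b\le 0$, so $w_b\equiv0$, i.e.\ $u(x,2b-y)=u(x,y)$, the asserted symmetry about $\{y=b\}$.

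The main obstacle is the weak regularity of $f$: all the maximum-principle and Hopf-lemma ingredients above are classically stated for locally Lipschitz (or at least one-sided Lipschitz) nonlinearities, since they rely on bounding the linearized coefficient $c_\lambda=\frac{f(u_\lambda)-f(u)}{u_\lambda-u}$. Under $(h_1)$--$(h_4)$ this quotient need not be bounded, so the delicate point — and the place where I expect to simply invoke the machinery already set up for Theorem~\ref{casteorem} in Section~2 rather than redo it — is to verify that in the relevant regions (narrow strips near $\{y=0\}$, and compact interior sets where $u$ is bounded away from $0$) the hypotheses on $f$ still yield a usable maximum principle, Hopf lemma, and strong maximum principle. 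A secondary technical point is the unboundedness of $u$ in the strip (only $u\in C^2(\overline{\Sigma_c})$ for $c<2b$ is assumed in the non-symmetric part): one must make sure the comparison functions $w_\lambda$ are well-defined and the limiting arguments are legitimate without any a priori global bound, which is why the conclusion $\partial_y u>0$ is stated only on $\Sigma_b$ and the symmetry part carries the extra hypothesis $u\in C^2(\overline{\Sigma_{2b}})$.
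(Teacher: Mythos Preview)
Your approach has a genuine gap at the level of the maximum-principle machinery. You propose to reflect about horizontal lines $\{y=\lambda\}$ and apply a narrow-domain (small-measure) weak comparison principle on $\Sigma_\lambda=\R\times(0,\lambda)$; but $\Sigma_\lambda$ has \emph{infinite} Lebesgue measure, so Proposition~\ref{maxpri} (and its standard variants) does not apply. A maximum principle in an unbounded narrow strip would require at least a growth bound on $w_\lambda$, hence on $u$, which is precisely what the theorem does \emph{not} assume. The same obstruction kills your continuation step (iv): in the decomposition $\Sigma_{\bar\lambda+\delta}=K\cup(\Sigma_{\bar\lambda+\delta}\setminus K)$ with $K$ compact, the complement is always of infinite measure, so the small-measure principle cannot close the argument there either.

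This is not a cosmetic difficulty but the very reason the paper develops the \emph{rotating--sliding} technique. Instead of reflecting about full horizontal lines, one reflects about tilted lines $L_{x_0,s,\theta}$, $\theta\in(0,\tfrac{\pi}{2})$, and works inside the \emph{bounded} triangles $\mathcal{T}_{x_0,s,\theta}$ (see \eqref{rev1}). On these triangles Proposition~\ref{maxpri} is legitimately available (finite small measure and $u$ bounded there, since $u\in C^2(\overline{\Sigma_c})$); Lemma~\ref{smallperturbations} and Lemma~\ref{largeper} then push $(s,\theta)$ through a continuous family, and only at the end does one let $\theta\to 0$ to recover the horizontal-reflection inequality $u\le u_\lambda$ locally near each $x_0$. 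Since $x_0\in\R$ is arbitrary, this yields the global inequality without ever invoking a maximum principle on an unbounded set. For the strip one simply stops the procedure at $\lambda=b$; symmetry follows by running the same bounded-triangle argument from $\{y=2b\}$ downward. Your sketch would become correct if you replaced the direct horizontal moving-line steps by this rotating--sliding argument on triangles, i.e.\ by invoking Lemmas~\ref{smallperturbations}--\ref{caffbe} as the paper does.
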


\bigskip

As already observed for Theorem \ref{casteorem}, the conclusion of Theorem \ref{T:3spe}  is not true if we drop the assumption $(h_3)$ in item i) (see section 6 of this paper).

\medskip

The  Theorem above complements Theorem 1.3 of \cite{FS0} and also provides an affirmative answer to an (extended version of an) open question posed by Berestycki, Caffarelli and Nirenberg in \cite {BCN1}.

\medskip

Next we prove a symmetry result in the case of the half-plane.

\begin{thm} \label{teo-sim}
Let $u\in C^2(\overline{\R^2_+})$ be a solution to
\begin{equation}
\begin{cases}
-\Delta u = f(u)& \text{in $\,\, \R^2_+$},\\
u\ge0& \text{in $\,\, \R^2_+$},\\
u=0 & \text{on $\,\, \pa \R^2_+$},
\end{cases}
\end{equation}
with $f$ {\rm locally Lipschitz continuous} on $[0\,, +\infty)$ and satisfying
\begin{equation}
\exists p >1 \quad : \quad  \lim_{t\to+\infty} \frac{f(t)}{t^p} =l \in (0,+\infty).
\end{equation}

Then $u$ is bounded and one-dimensional, i.e.
\[
u(x,y)\,=\,u_0(y)\, \qquad \forall \, (x,y) \in \R^2_+,
\]
for some bounded function $u_0 \in C^2([0, +\infty))$.
\end{thm}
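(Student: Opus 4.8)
The plan is to combine the monotonicity already established in Theorem~\ref{casteorem} (which applies here since $f$ is locally Lipschitz, hence fulfills $(h_1)$–$(h_4)$, and since under the stated growth hypothesis any nontrivial solution is positive by Theorem~\ref{teo-doppio}) with a Liouville-type / blow-down argument that rules out genuine two-dimensional behaviour. First I would observe that $\frac{\partial u}{\partial y}>0$ in $\R^2_+$, so for each fixed $x$ the function $y\mapsto u(x,y)$ is increasing; the first task is to show $u$ is \emph{bounded}. The growth assumption $f(t)\sim l\,t^p$ with $p>1$ forces an a priori bound: if $u$ were unbounded one could, at points where $u$ is large, use a rescaling $u_\rho(z):=\rho^{2/(p-1)}u(x_0+\rho z)$ together with standard elliptic estimates and the doubling lemma of Polá\v{c}ek–Quittner–Souplet to produce a nontrivial bounded entire solution (on $\R^2$ or on a half-plane $\R^2_+$) of $-\Delta v = l\, v^p$ with $0\le v$, $v(0)=1$; since $1<p$, such a solution cannot exist (Gidas–Spruck in the full space; Dancer / the half-space Liouville theorem in the half-space case, using that the subcritical exponent $p<\frac{N+2}{N-2}=\infty$ in dimension $N=2$ is automatic). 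Hence $\sup_{\R^2_+} u =: M<\infty$.

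Next, having boundedness, I would run the classical sliding method in the $x$-direction to show $u$ does not depend on $x$. The idea: for $\tau\in\R$ set $u^\tau(x,y):=u(x+\tau,y)$ and compare $u$ with $u^\tau$. Because $u$ is bounded, $f$ is globally Lipschitz on $[0,M]$, and the domain $\R^2_+$ is invariant under horizontal translations, a standard moving/sliding argument (start the comparison "from $x=-\infty$", use the boundary condition $u=0$ on $\{y=0\}$, the positivity $u>0$ inside, the strong maximum principle and the Hopf lemma on the half-plane, plus a narrow-region / maximum-principle-at-infinity estimate to get the process started) yields $u(x+\tau,y)\ge u(x,y)$ for every $\tau$, and by symmetry $\tau\mapsto\tau$ and $-\tau$ gives equality: $u^\tau\equiv u$ for all $\tau$. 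Therefore $u(x,y)=u_0(y)$ for some function $u_0$.

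Finally, with $u=u_0(y)$, the equation becomes the ODE $-u_0''=f(u_0)$ on $(0,\infty)$ with $u_0(0)=0$, $u_0\ge 0$, $u_0$ nondecreasing (from $\partial_y u>0$) and bounded by $M$; elliptic (here just ODE) regularity and $f\in C^0$ give $u_0\in C^2([0,+\infty))$, which is the asserted conclusion. One also gets, as a by-product, that $u_0'\to 0$ and $f(\,\lim u_0\,)=0$, but that is not needed for the statement.

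\medskip

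The \textbf{main obstacle} is the boundedness step. The delicate points are: (i) making the blow-down/rescaling rigorous on a \emph{half-plane} rather than on $\R^2$ — one must keep track of where the rescaled maximum points sit relative to $\{y=0\}$, and depending on whether $\rho^{-1}\,\mathrm{dist}(x_0,\partial\R^2_+)$ stays bounded or diverges one lands on the half-space Liouville theorem or the full-space Gidas–Spruck theorem; and (ii) ensuring the selected blow-up sequence really converges (in $C^2_{loc}$) to a \emph{nontrivial} solution, which is exactly what the Polá\v{c}ek–Quittner–Souplet doubling lemma is designed to guarantee. If instead one prefers to avoid blow-up entirely, an alternative is a direct integral/Pohozaev-type estimate exploiting the monotonicity $\partial_y u>0$ and the superlinearity of $f$, but the cleanest route is the Liouville-theorem argument sketched above. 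The sliding step and the ODE step are then routine given boundedness and the global Lipschitz bound on $f|_{[0,M]}$.
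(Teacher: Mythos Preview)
Your boundedness step and the paper's rest on the same Pol\'a\v{c}ik--Quittner--Souplet machinery. The paper is more economical: it cites directly the universal singularity estimate from \cite{PQS}, namely
\[
u(x,y)\le C\bigl(1+\mathrm{dist}((x,y),\partial\R^2_+)^{-2/(p-1)}\bigr)=C\bigl(1+y^{-2/(p-1)}\bigr),
\]
and then combines it with the monotonicity $\partial_y u>0$ already obtained (for $y\ge 1$ one reads off $u\le 2C$, and for $0<y<1$ monotonicity gives $u(x,y)<u(x,1)\le 2C$). Your blow-up/doubling sketch is essentially the proof of that PQS estimate, so there is no substantive difference here.

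The one-dimensionality step, however, has a gap. Your sliding in the $x$-direction proposes to ``start the comparison from $x=-\infty$'', but there is no boundary and no a priori decay of $u$ in the $x$-direction to anchor such a start: both $u$ and $u^\tau=u(\cdot+\tau,\cdot)$ vanish on $\{y=0\}$, are positive in $\R^2_+$, and are bounded, yet nothing you have established rules out, for instance, $x$-periodic behaviour of $u$---and in that scenario neither $u^\tau\ge u$ nor $u^\tau\le u$ holds for generic $\tau$, so the sliding cannot even begin. The narrow-region maximum principle you allude to helps only in strips that are thin in $y$, and you have no control of $u-u^\tau$ on the upper edge of such a strip; a ``maximum principle at infinity'' likewise needs structure (decay, or a sign on $f'$) that is not available here. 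To make an $x$-sliding or $x$-moving-plane argument work one would first have to establish something like uniform convergence $u(x,y)\to c$ as $y\to\infty$, so that the comparison also closes at $y=+\infty$; you have not addressed this, and it is not routine.

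The paper sidesteps the issue: once $u$ is bounded it obtains $|\nabla u|$ bounded via standard elliptic estimates and then invokes Theorem~1.6 of \cite{FS0}, a one-dimensional symmetry result (specific to the two-dimensional half-plane) for bounded positive solutions with $\partial_y u>0$ and bounded gradient, proved there by different methods than $x$-sliding.
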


As a consequence of the  results above we also obtain the following


\begin{cor}\label{CorClas}
Let $f$ be {\rm locally Lipschitz continuous} on $[0\,, +\infty)$ and satisfying
\begin{equation}\label{posit}
f(t)>0 \qquad \forall \, t>0,
\end{equation}
\begin{equation}
\exists \, p >1 \quad : \quad  \lim_{t\to+\infty} \frac{f(t)}{t^p} =l \in (0,+\infty).
\end{equation}
Then
\item[i)] if $f(0) =0$, the only solution of class $ C^2(\overline{\R^2_+})$ of
\begin{equation}\label{pblA}
\begin{cases}
-\Delta u = f(u)& \text{in $\,\, \R^2_+$},\\
u\ge0& \text{in $\,\, \R^2_+$},\\
u=0 & \text{on $\,\, \pa \R^2_+$},
\end{cases}
\end{equation}
is $ u \equiv 0$,

\item[ii)] if $f(0) >0$, problem \eqref{pblA} has no solution of class $ C^2(\overline{\R^2_+})$.

\end{cor}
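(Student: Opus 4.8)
The plan is to derive both items directly from Theorem \ref{teo-sim} together with a one-dimensional nonexistence fact. First I would observe that under \eqref{posit} the nonlinearity $f$ satisfies $f(0)\ge 0$, so Theorem \ref{teo-doppio} i) applies; combined with the growth hypothesis on $f$, Theorem \ref{teo-sim} applies as well. Hence any solution $u\in C^2(\overline{\R^2_+})$ of \eqref{pblA} is either identically zero or bounded and one-dimensional, $u(x,y)=u_0(y)$ for all $(x,y)\in\R^2_+$, with $u_0\in C^2([0,+\infty))$ bounded and (by Theorem \ref{teo-doppio} i) again, in the nontrivial case) satisfying $u_0>0$ on $(0,+\infty)$ and $u_0'>0$ on $(0,+\infty)$.

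The core of the argument is then to rule out the existence of such a nontrivial bounded $u_0$. The profile $u_0$ solves the ODE $-u_0''=f(u_0)$ on $(0,+\infty)$ with $u_0(0)=0$, $u_0\ge 0$, $u_0$ bounded. Multiplying by $u_0'$ and integrating yields the conserved Hamiltonian $\tfrac12 (u_0')^2 + F(u_0) \equiv E$, where $F(t):=\int_0^t f(s)\,ds$. Since $u_0$ is bounded and monotone it has a finite limit $L:=\lim_{y\to+\infty}u_0(y)\in(0,+\infty]$; boundedness gives $L<+\infty$. Then $u_0'(y)\to 0$ necessarily (otherwise $u_0$ would be unbounded), so evaluating the Hamiltonian in the limit gives $E=F(L)$, while at $y=0$ it gives $E=\tfrac12(u_0'(0))^2$. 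Along the trajectory, for $0<t<L$ one has $\tfrac12 (u_0')^2 = F(L)-F(t)>0$; but since $f>0$ on $(0,+\infty)$, $F$ is strictly increasing on $(0,+\infty)$, hence $F(L)-F(t)>0$ is consistent, and the time to travel from a level $t_0\in(0,L)$ up to $L$ is $\int_{t_0}^{L} \frac{dt}{\sqrt{2(F(L)-F(t))}}$. I would show this integral is finite, because near $t=L$ one has $F(L)-F(t)\sim f(L)(L-t)$ with $f(L)>0$ (using $f(L)>0$, which holds since $L>0$), so the integrand behaves like $(L-t)^{-1/2}$, which is integrable. A finite travel time to reach the level $L$ contradicts the fact that $u_0(y)<L$ for all finite $y$ and $u_0(y)\to L$ only as $y\to+\infty$ — i.e. the orbit would reach $u_0=L$ at a finite $y^*$, where $u_0'(y^*)=0$ and then, by the sign of $f$, $u_0$ would have to turn around and decrease, contradicting $u_0'>0$. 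Hence no nontrivial bounded one-dimensional solution exists, which proves item i).

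For item ii), when $f(0)>0$ the same reduction via Theorem \ref{teo-doppio} i) and Theorem \ref{teo-sim} forces any solution to be bounded and one-dimensional, $u=u_0(y)$, with $u_0>0$ on $(0,+\infty)$ (the trivial solution is excluded because $-\Delta 0 = 0 \ne f(0)>0$). The Hamiltonian argument above applies verbatim — indeed it is slightly easier, since now $F$ is strictly increasing on all of $[0,+\infty)$ — and again produces a finite travel time to the limiting level $L$, a contradiction. Therefore \eqref{pblA} has no solution of class $C^2(\overline{\R^2_+})$ at all.

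The step I expect to be the main obstacle is the clean justification that the limit $L$ is finite and that $u_0'\to 0$ at infinity, and then the integrability estimate $\int^{L}\frac{dt}{\sqrt{2(F(L)-F(t))}}<+\infty$: one must be careful that $F(L)-F(t)$ does not vanish to higher order than first at $t=L$, which is guaranteed precisely by $f(L)>0$. A subtlety worth spelling out is why $u_0'(y)\to 0$: if not, then along a subsequence $(u_0')^2$ stays bounded away from $0$, hence $F(u_0(y))$ stays bounded away from $E$, contradicting $u_0(y)\to L$ and continuity of $F$; combined with monotonicity of $u_0'$-free quantities this pins down $u_0'\to 0$. Everything else is a direct invocation of the theorems already proved in the paper.
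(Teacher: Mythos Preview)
Your reduction via Theorem \ref{teo-doppio} i) and Theorem \ref{teo-sim} to a bounded, strictly increasing one-dimensional profile $u_0$ is exactly what the paper does. The divergence is in how the nontrivial profile is ruled out. The paper's argument is a one-liner: since $u_0$ is bounded and increasing, the limit $\bar l := \lim_{y\to+\infty} u_0(y) \in (0,+\infty)$ exists, and this limit must be a zero of $f$ (because $u_0''(y) = -f(u_0(y)) \to -f(\bar l)$, and if $f(\bar l)>0$ then $u_0'' \le -c <0$ for large $y$, forcing $u_0'\to -\infty$, contradicting $u_0'>0$). This immediately contradicts \eqref{posit}. Your Hamiltonian/finite-travel-time argument is correct and reaches the same contradiction, but it is considerably more elaborate: you set up the energy $\tfrac12 (u_0')^2 + F(u_0)=E$, identify $E=F(L)$, and then estimate the integral $\int^{L}\frac{dt}{\sqrt{2(F(L)-F(t))}}$ near $t=L$ using $f(L)>0$. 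That integrability near $L$ is precisely the statement $f(L)\neq 0$, so your machinery is, in effect, an indirect re-derivation of the paper's observation that $L$ cannot be a non-zero of $f$.

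Both approaches are valid; the paper's buys brevity and avoids the somewhat delicate justification of $u_0'\to 0$ and the travel-time inversion, while yours gives a more quantitative picture of the phase plane (and would extend to situations where one wants to locate the turning point, not merely exclude the profile). Your treatment of item ii) is correct as written and matches the paper's handling (the paper simply notes that if a solution exists it must be $u\equiv 0$, which forces $f(0)=0$).
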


We conclude this section with the following classification result

\begin{thm}\label{teo-clas2}
Let $f$ be  non decreasing {\rm locally Lipschitz continuous} on $[0\,, +\infty)$ satisfying
\begin{equation}\label{positzero}
f(0) \ge 0
\end{equation}

Then, the problem
\begin{equation}\label{pblB}
\begin{cases}
-\Delta u = f(u)& \text{in $\,\, \R^2_+$},\\
u\ge0& \text{in $\,\, \R^2_+$},\\
u=0 & \text{on $\,\, \pa \R^2_+$},
\end{cases}
\end{equation}
has a nontrivial solution of class  $ C^2(\overline{\R^2_+})$ if and only if $f \equiv 0$. In the latter case $u$ is necessarily linear, i.e., $ u(x,y) = cy$, for some constant $c > 0$.

Furthermore, when $ f \not \equiv 0$, we have that

\item[i)] if $f(0) =0$, the only solution of class $ C^2(\overline{\R^2_+})$ of \eqref{pblB} is $ u \equiv 0$.

\item[ii)] if $f(0) >0$, problem \eqref{pblB} has no solution of class $ C^2(\overline{\R^2_+})$.

\end{thm}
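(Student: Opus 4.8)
The plan is to split according to the sign of $f(0)$: the regimes $f\equiv 0$, $f(0)>0$, and ($f(0)=0$ with $f\not\equiv 0$) together yield the dichotomy and items i)--ii) (recall $f$ non-decreasing with $f(0)\ge0$ forces $f\ge0$).

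\textbf{The case $f\equiv 0$.} If $u\not\equiv 0$ solves \eqref{pblB}, then by Theorem \ref{teo-doppio} i) $u>0$ and $\partial u/\partial y>0$ in $\R^2_+$, and $\partial u/\partial y>0$ on $\partial\R^2_+$ by Hopf's lemma. Extending $u$ by odd reflection $\tilde u(x,-y):=-u(x,y)$, the Schwarz reflection principle makes $\tilde u$ harmonic on $\R^2$; then $\partial\tilde u/\partial y$ is harmonic on $\R^2$ and, using the oddness of $u$ together with $\partial u/\partial y>0$ on $\overline{\R^2_+}$, it is strictly positive on all of $\R^2$. By the Liouville theorem for positive harmonic functions $\partial\tilde u/\partial y\equiv c>0$, so $\tilde u(x,y)=cy+a(x)$; harmonicity gives $a''\equiv 0$ and $\tilde u|_{\{y=0\}}=0$ gives $a\equiv 0$, hence $u(x,y)=cy$ with $c>0$. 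This settles $f\equiv 0$ (existence is witnessed by $u=y$, and every nontrivial solution is linear); I will reuse the conclusion that \emph{a nonnegative harmonic function on $\R^2_+$, of class $C^2$ up to the boundary and vanishing there, equals $cy$ for some $c\ge 0$.}

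\textbf{The case $f(0)>0$.} Since $f$ is non-decreasing and $u\ge 0$, $-\Delta u=f(u)\ge f(0)=:c_0>0$ in $\R^2_+$. For $R>1$ let $v_R$ solve $-\Delta v_R=c_0$ in $B_R^+:=B_R(0)\cap\R^2_+$ with $v_R=0$ on $\partial B_R^+$; since $u\ge 0=v_R$ on $\partial B_R^+$ and $-\Delta(u-v_R)\ge 0$, the maximum principle gives $u\ge v_R$ in $B_R^+$. By scaling $v_R(z)=R^2 v_1(z/R)$, and $\partial v_1/\partial y(0)=:\alpha>0$ by Hopf's lemma, so evaluating at the fixed point $(0,1)$ gives $u(0,1)\ge v_R(0,1)=R^2 v_1(0,1/R)=(\alpha+o(1))R\to+\infty$ as $R\to\infty$, which is impossible. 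Hence \eqref{pblB} has no solution, which is item ii).

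\textbf{The case $f(0)=0$, $f\not\equiv 0$.} Suppose $u\not\equiv 0$ solves \eqref{pblB}. By Theorem \ref{teo-doppio} i), $u>0$ and $\partial u/\partial y>0$ in $\R^2_+$, so $g(x):=\lim_{y\to+\infty}u(x,y)\in(0,+\infty]$ exists and $u(x,y)<g(x)$ everywhere. Set $a:=\sup\{t\ge 0: f(t)=0\}$, finite because $f\not\equiv0$, so $f>0$ on $(a,+\infty)$, and fix $t_1>a$, $\kappa:=f(t_1)>0$. First, $\sup_{\R^2_+}u>a$: otherwise $f(u)\equiv 0$, so $u$ is harmonic and nontrivial, hence $u=cy$ with $c>0$ by the case $f\equiv 0$ --- unbounded, contradicting $u\le a$. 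Next --- the main point --- $u$ is bounded; granting this, the translates $u_n(x,y):=u(x,y+n)$ are uniformly bounded in $C^{2,\alpha}_{loc}(\R^2)$ by elliptic estimates (here $f$ locally Lipschitz is used), so a subsequence converges in $C^2_{loc}(\R^2)$ to some $U$ with $-\Delta U=f(U)$; since $u_n(x,y)\uparrow g(x)$ pointwise, $U(x,y)=g(x)$, so $g\in C^2(\R)$ solves $-g''=f(g)\ge 0$ with $0<g\le\sup u$. A concave function on $\R$ that is bounded below is constant, so $g\equiv c_0>0$; then $f(c_0)=0$, hence $c_0\le a$, hence $u(x,y)<g(x)=c_0\le a$ everywhere --- contradicting $\sup u>a$. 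Thus $u\equiv 0$, which is item i), and the proof (modulo boundedness) is complete.

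\textbf{The main obstacle: boundedness of $u$ in the last case.} Since $-\Delta u=f(u)\ge 0$, $u$ is a nonnegative superharmonic function on $\R^2_+$ vanishing on $\partial\R^2_+$; its greatest harmonic minorant is nonnegative, vanishes on $\partial\R^2_+$, and hence equals $cy$ ($c\ge 0$) by the case $f\equiv 0$, giving the Riesz representation $u(z)=cy+\int_{\R^2_+}G(z,w)f(u(w))\,dw$, with $G$ the Green function of $\R^2_+$ (the potential being finite since $u$ is). If $c>0$ then $u\ge cy\ge t_1$ on $\{y\ge t_1/c\}$, so $-\Delta u\ge\kappa$ there, and the half-disk comparison of the case $f(0)>0$, performed now on half-disks centered at $(0,t_1/c)$, forces $u(0,\,t_1/c+1)=+\infty$, a contradiction; so $c=0$ and $u$ is a pure Green potential, whence $u(x,y)=o(y)$ as $y\to+\infty$ for every $x$. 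It remains to exclude an unbounded pure potential: if $g\equiv+\infty$ one shows $\inf_{|x|\le L}u(\cdot,y)\to+\infty$ as $y\to+\infty$ for each $L$, so $u\ge t_1$ on half-disks of radius $L$ at suitable heights, whose comparison functions have maxima of order $L^2\kappa$; reconciling this growth with $u$ being a finite sublinear potential --- which is precisely where the hypothesis ``$f$ non-decreasing'' is decisive --- is the technical heart of the argument, and the case $g(x_0)<+\infty$ for some $x_0$ must be folded in along the same lines. I expect this boundedness step to be the genuine difficulty; the rest is the standard ``moving lines, then translate to one dimension'' scheme resting on Theorem \ref{teo-doppio}.
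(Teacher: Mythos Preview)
Your treatment of the cases $f\equiv 0$ and $f(0)>0$ is correct, but the third case ($f(0)=0$, $f\not\equiv 0$) has a genuine gap: you correctly isolate boundedness of $u$ as the crux and then do not prove it. The Riesz-decomposition sketch rules out a linear part, but the claim that an unbounded pure Green potential is incompatible with the half-disk comparisons is left as a heuristic (``reconciling this growth with $u$ being a finite sublinear potential\ldots''), and the mixed case $g(x_0)<+\infty$ for some $x_0$ is dismissed with a phrase. Without boundedness the translation-to-one-dimension step does not start.

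The paper bypasses both the case split and the boundedness question entirely, via a short Liouville argument on the \emph{linearized} equation. Set $v:=\partial_y u$; by Theorem~\ref{teo-doppio} and Hopf's lemma $v>0$ on $\overline{\R^2_+}$. Differentiating in $y$ gives $-\Delta v = f'(u)\,v \ge 0$ a.e.\ in $\R^2_+$ (this is the \emph{only} place the monotonicity of $f$ is used). Testing with $\psi^2 v^{-1}$, $\psi\in C^{0,1}_c(\R^2)$, and integrating by parts, the boundary term on $\partial\R^2_+$ carries the sign $\partial_y v = \partial_{yy}u = -f(0)\le 0$ (this is where $f(0)\ge 0$ enters), and one obtains
\[
\int_{\R^2_+}\frac{|\nabla v|^2}{v^2}\,\psi^2 \;\le\; 4\int_{\R^2_+}|\nabla\psi|^2.
\]
Inserting the standard logarithmic cutoff $\psi_R$ (this is the step that uses $N=2$) makes the right-hand side vanish as $R\to\infty$, so $v\equiv c>0$, hence $u(x,y)=cy$ and $f\equiv 0$. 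This handles all three of your cases at once, with no a~priori bound on $u$ and no translation-limit argument; the hypotheses ``$f$ non-decreasing'' and ``$f(0)\ge 0$'' are each used exactly once, for the sign of $f'(u)$ and for the sign of the boundary term respectively.
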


\begin{rem}
The assumption \eqref{positzero} is sharp. Indeed, the function $u(x,y)=\,1-\cos y$ is a nontrivial solution of
\begin{equation}
\begin{cases}
-\Delta u=u-1 & \text{ in } \mathbb{R}^2_+,\\
\quad u \geqslant 0 & \text{ in } \mathbb{R}^2_+,\\
\quad u=0\,\, &\text{ on } \partial\mathbb{R}^2_+,
\end{cases}
\end{equation}
and $f(0) = -1 <0$.
\end{rem}

\begin{rem}

\item[i)] Theorem \ref{teo-clas2} applies, for instance, to the functions $ f(u) = u^p + c$, with $ p\ge 1 $ and $ c \ge 0$. In particular, for $c=0$ (i.e. for $f(u) =u^p$) we obtain a new and different proof  of a celebrated result of Gidas and Spruck \cite{GiSp}  (see also \cite{DS3}).

\item[ii)] Note also that one can apply item i) of Corollary \ref{CorClas} to $f(u) =u^p$, $p>1$, to obtain another new and different proof of the above mentioned result of Gidas and Spruck \cite{GiSp}.

\end{rem}

\bigskip

In this work we focused on the two-dimensional case and we provided a precise description of the situation under very general assumptions (both on $f$ and on $u$). Differently from the two-dimensional case, the situation is not yet well-understood for dimensions $N \ge 3$. For results in the higher dimensional case (and with additional assumptions on $f$ and on $u$)
we refer to \cite{BCN1,BCN2,BCN3,BCN5,CLZ,CMS,melian,DS3, Dancer,Dancer2,EL,Fa1,Fa,Fa2,FS0,FSV,soave,FV,FVb,GiSp}.

\section{Assumptions and preliminary results} \label{preliminaries}

We start discussing the main assumptions on the nonlinearity $f$. In Theorem \ref{teo-doppio} we assume that
 $f$ is {\rm locally Lipschitz continuous} on $[0\,, +\infty)$. This allow us to deal with nonnegative solutions. In Theorem \ref{casteorem} we restrict our attention to positive solutions weakening the assumptions on the nonlinearity. It is convenient to set the following

\noindent ($h_1$) $f$ is continuous in $[0\,, +\infty)$.\\
\noindent ($h_2$) $f$ is {\it locally Lipschitz continuous from above} in $[0\,,\,\infty)$, i.e., for every $ b>0$ there is $L_b>0$ :
\begin{equation}\nonumber
f(u)-f(v)\leq L_b (u-v)\qquad \text{for any}\quad0\leq v\leq u \le b.
\end{equation}\\
\noindent ($h_3$)
For any $\bar t\in [0\,, +\infty)$, there exists $\delta=\delta(\bar t)>0$ such that
\begin{equation}\label{prop-conf-forte}
f(s)-f(t)\leq g(t-s)\,\qquad \forall \,s\leq t\in [\bar t- \frac{\delta}{2}\,,\,\bar t+\frac{\delta}{2}] \cap [0\,, +\infty)
\end{equation}
with $g\in C^0[0\,,\delta]$, $g(0)=0$ and, either $g$ vanishes identically in $[0,\delta]$, or it is  positive and non decreasing in $[0,\delta]$ with
\begin{equation}\label{cond-int-PM}
\int_0^\delta \frac{ds}{\sqrt{G(s)}}=\infty
\end{equation}
where $G(s)\,:=\, \int_0^s\,g(t)\,dt$. \\
\noindent ($h_4$) Condition  ($h_3$) holds but only for any $\bar t\in (0\,, +\infty)$.

\bigskip

Note that the hypothesis $(h_2)$ is very weak (actually it is not even enough to ensure the continuity of $f$) and it is clearly satisfied by any non increasing function. Actually, all the nonlinearities of the form
\[
 f(s)\,:=\,f_1(s)+f_2(s)\,,
 \]
for some non increasing continuous function $f_1(\cdot)$ in $[0,\infty)$ and some $f_2(\cdot),$ which is locally Lipschitz continuous in $[0,\infty)$, satisfy both $(h_1)$ and $(h_2)$.

We also observe that the assumption $(h_3)$ is natural. Indeed, the conditions imposed on $g$ are the well-known optimal assumptions ensuring the validity of the strong maximum principle and the Hopf's lemma (see \cite{PSB} for instance).  It is also clear that every locally Lipschitz continuous function on $[0\,,\,\infty)$ satisfies $(h_1)$,$(h_2)$ and $(h_3)$ with $g(t)= L_{\delta} t$, $L_{\delta}>0$ being any constant larger than the Lipschitz constant of $f$ on the interval $[0,\delta]$.  On the other hand, the converse is not true. This is the case e.g.  when
 $f(\cdot)$ has the form
 \[
 f(s)\,:=\,g(s)+c\,,
 \]
where $g(s)\equiv s  \,\log  (s)$ in some interval $(0,\delta)$, $\delta>0$ and $g(\cdot)$ of class $C^1$ in $(0\,,\,\infty)$.
It is easy to verify that such a nonlinearity  fulfills ($h_3$) but it is not Lipschitz continuous at zero.


\medskip

Now we are ready to prove

\medskip

\begin{prop}[Weak Comparison Principle in domains of small measure]\label{maxpri}
Let $ N\ge 1$, assume that $f$ fulfills ($h_2$) and fix a real number $k>0$. Then there exists $\vartheta\,=\,\vartheta(N,k,f)>0$ such that, for any domain $D \subset \R^N$, with $\mathcal L (D)\leq \vartheta$, and any $u,v\in H^1(D) \cap C^0(\overline D)$ such that
\begin{equation}\label{eqdifferenza}
\begin{cases}
-\Delta u-f(u)\leq -\Delta v-f(v)& \qquad\text{in}\quad D, \\
0 \le u, v \le k & \qquad\text{in}\quad D,\\
u \le v & \qquad \text{on}\quad \partial D,
\end{cases}
\end{equation}
then


\[
u\leq v\qquad\text{in}\,\,D\,.
\]
\end{prop}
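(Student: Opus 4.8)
The plan is to derive the conclusion from a standard energy/Sobolev argument, exploiting only the one-sided Lipschitz bound $(h_2)$. First I would set $w := (u-v)^+ \in H^1_0(D)$, which is legitimate because $u \le v$ on $\partial D$ and $u,v \in H^1(D)\cap C^0(\overline D)$; note also that $0\le u,v\le k$ throughout $D$, so the constant $L_k$ from $(h_2)$ is available. Testing the differential inequality $-\Delta(u-v) \le f(u)-f(v)$ against $w$ and integrating by parts over $D$ gives
\begin{equation}\nonumber
\int_D |\nabla w|^2 \, dx \;\le\; \int_{\{u>v\}} \big(f(u)-f(v)\big)(u-v)\, dx.
\end{equation}
On the set $\{u>v\}$ we have $0\le v < u \le k$, so $(h_2)$ yields $f(u)-f(v)\le L_k(u-v)$, and since $u-v=w>0$ there, the right-hand side is bounded by $L_k \int_D w^2\, dx$. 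Hence $\int_D |\nabla w|^2 \le L_k \int_D w^2$.

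Next I would invoke the Sobolev (or Poincaré–Sobolev) inequality on $D$ together with Hölder's inequality to absorb the right-hand side into the left. For $N\ge 3$, from $w\in H^1_0(D)$ one has $\|w\|_{L^{2^*}(D)}^2 \le C_N \int_D|\nabla w|^2$ with $2^* = 2N/(N-2)$; then Hölder gives $\int_D w^2 \le \mathcal L(D)^{2/N}\, \|w\|_{L^{2^*}(D)}^2$. Combining,
\begin{equation}\nonumber
\int_D |\nabla w|^2 \, dx \;\le\; L_k\, C_N\, \mathcal L(D)^{2/N} \int_D |\nabla w|^2\, dx.
\end{equation}
Choosing $\vartheta = \vartheta(N,k,f)$ so small that $L_k C_N \vartheta^{2/N} < 1$ forces $\int_D|\nabla w|^2 = 0$, hence $w\equiv 0$ (since $w\in H^1_0(D)$), i.e. $u\le v$ in $D$. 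For $N=2$ I would use a Sobolev embedding $H^1_0(D)\hookrightarrow L^q(D)$ for some fixed $q>2$ (say $q=4$) with the embedding constant controlled on domains of bounded measure, and for $N=1$ the Poincaré inequality $\int_D w^2 \le C\,\mathcal L(D)^2 \int_D |\nabla w|^2$ does the job directly; in each low-dimensional case the same smallness-of-measure absorption closes the estimate.

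The only genuine subtlety — and the place I would be most careful — is the justification of the integration by parts and the use of $w$ as a test function: one must check that $w=(u-v)^+$ genuinely belongs to $H^1_0(D)$ given only $u,v\in H^1(D)\cap C^0(\overline D)$ with $u\le v$ on $\partial D$, and that the distributional inequality $-\Delta(u-v)\le f(u)-f(v)$ (which a priori holds in the weak $H^1$ sense against nonnegative test functions) may be paired with $w$. This is standard but requires the boundary trace of $u-v$ to be nonpositive, which follows from continuity up to $\overline D$. A second minor point is making the dependence of the Sobolev/Poincaré constant on $D$ harmless: for $N\ge 2$ the relevant embedding constant for $H^1_0(D)$ into $L^{2^*}$ (or into $L^q$, $N=2$) is universal, independent of $D$, so $\vartheta$ depends only on $N$, $k$, and $f$ (through $L_k$), exactly as claimed. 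Everything else is the routine Stampacchia-type absorption argument.
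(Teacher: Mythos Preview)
Your proof is correct and follows essentially the same approach as the paper: test with $(u-v)^+\in H^1_0(D)$, apply $(h_2)$ to get $\int_D|\nabla w|^2\le L_k\int_D w^2$, and absorb via a Poincar\'e--Sobolev inequality with constant proportional to $\mathcal L(D)^{2/N}$. The paper writes this as a single ``Poincar\'e inequality'' step valid for all $N$, whereas you (more carefully) split into the cases $N\ge 3$, $N=2$, and $N=1$; otherwise the arguments are identical.
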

\begin{proof}
We use $(u-v)^+\in H^1_0(D)$ as test function in the weak formulation of \eqref{eqdifferenza} and get
\begin{equation}\nonumber
\begin{split}
\int_{D} \big|\nabla (u-v)^+\big|^2\,dx&\leq \int_{D} \, (f(u)-f(v)) (u-v)^+ \,dx\\
&\leqslant L_k\int_{D} \,((u-v)^+)^2\,dx
\end{split}
\end{equation}

where $L_k$ is the positive constant appearing in $(h_2)$ and corresponding to $ b = k>0$. Note that $L_k$ depends only on $k$ and $f$.

An application of Poincar\'e inequality gives

\begin{equation}\nonumber
\int_{D} \big|\nabla (u-v)^+\big|^2\,dx \leqslant L_k(C_N (\mathcal L (D'))^{\frac{2}{N}}) \int_{D} \big|\nabla (u-v)^+\big|^2\,dx\,,
\end{equation}

where $C_N>0$ is a constant depending only on the euclidean dimension $N$.

\noindent The desired conclusion then follows by choosing $\vartheta = \frac{1}{2 L_k^{\frac{N}{2}} C_N}$.  Indeed, the latter implies that
$ L_k (C_N (\mathcal L (D))^{\frac{2}{N}}) <1 $ and so we get that $(u-v)^+\equiv 0$ and the thesis.
\end{proof}

\bigskip

Now we focus on the two-dimensional case and fix some notations.

Given $x_0 \in \R$, $s>0$ and $\theta\in(0\,,\,\frac{\pi}{2})$, let $L_{x_0,s,\theta}$ be the line, with slope $\tan(\theta)$, passing through $(x_0,s)$. Also, let
$V_\theta$ be the vector orthogonal to $L_{x_0,s,\theta}$ such that $(V_\theta,e_2) >0$ and $\|V_\theta\|=1$.

We denote by
\begin{equation}\label{rev1}
\mathcal{T}_{x_0,s,\theta},
\end{equation}
 the (open) triangle delimited by $L_{x_0,s,\theta}$, $\{y=0\}$ and $\{x=x_0\}$, and we define

$$
u_{x_0,s,\theta}(x)=u(T_{x_0,s,\theta}(x)), \quad x \in \mathcal{T}_{x_0,s,\theta}
$$
where $T_{x_0,s,\theta}(x)$ is the point symmetric to $x$, w.r.t. $L_{x_0,s,\theta}$.  It is immediate to see that
$u_{x_0,s,\theta}$ still fulfills $-\Delta u_{x_0,s,\theta}=f(u_{x_0,s,\theta})$ on the triangle $\mathcal{T}_{x_0,s,\theta}$.

We also consider
\begin{equation}\label{fskhkhgfjhgjhfj}
w_{x_0,s,\theta}=u-u_{x_0,s,\theta} \qquad on \quad \mathcal{T}_{x_0,s,\theta}
\end{equation}
and observe that
\begin{equation}\label{conf-forte}
w_{x_0,s,\theta} \le 0 \quad on \quad \mathcal{T}_{x_0,s,\theta} \quad \Longrightarrow \quad either \quad w_{x_0,s,\theta} \equiv 0 \quad or \quad w_{x_0,s,\theta} < 0 \quad  on \quad \mathcal{T}_{x_0,s,\theta}
\end{equation}
thanks to the assumption ($h_3$) (resp. to ($h_4$), if $u$ is supposed to be positive).  Indeed, if $ \overline {x} \in \mathcal{T}_{x_0,s,\theta}$ is such that  $w_{x_0,s,\theta} (\overline{x}) = 0 $ then, by the continuity of $u$ and of $ u_{x_0,s,\theta} $ we can find an open ball centered at $\overline{x}$, say $B_{\overline{x}} \subset \mathcal{T}_{x_0,s,\theta}$, such that

\begin{center}

$\forall \, x \in B_{\overline{x}} \quad (u(\overline{x}) - \frac{\delta}{2})^+ \le u(x) \le u(\overline{x}) + \frac{\delta}{2}$,

$\forall \, x \in B_{\overline{x}} \quad (u(\overline{x}) - \frac{\delta}{2})^+ \le u_{x_0,s,\theta} (x) \le u(\overline{x}) + \frac{\delta}{2}$,

\end{center}

where $  \overline{t} = u(\overline{x}) = u_{x_0,s,\theta} (\overline{x})$ and $ \delta =\delta(\overline{t}) >0$ is the one provided by the assumption ($h_3$) (resp. ($h_4$)).
Now, since $w_{x_0,s,\theta} \le 0$ on $\mathcal{T}_{x_0,s,\theta} $, we can apply \eqref{prop-conf-forte} to get


\begin{equation}\label{eq-conf-forte}
\begin{cases}
\Delta ( - w_{x_0,s,\theta} ) \leq g(- w_{x_0,s,\theta} )& \qquad\text{in}\quad B_{\overline{x}}, \\
- w_{x_0,s,\theta} \ge 0 & \qquad\text{in}\quad B_{\overline{x}}, \\
- w_{x_0,s,\theta} (\overline{x}) = 0
\end{cases}
\end{equation}
and the strong maximum principle (see \cite{PSB} for instance) yields $w_{x_0,s,\theta}  \equiv  0$ on $B_{\overline{x}}$. After that, a standard  connectedness argument provides $w_{x_0,s,\theta}  \equiv  0$ on the entire triangle $\mathcal{T}_{x_0,s,\theta}$.








\bigskip

In what follows we shall make repeated use of a refined version of the \emph{moving plane technique} \cite{S} (see also \cite{BN,GNN}). Actually we will exploit a
\emph{rotating plane technique} and a \emph{sliding plane technique}  developed in \cite{DS3,FS0}.

Let us give the following definition

\begin{definition}\label{defcondition} Given $x_0, s$ and $\theta $ as above, we say that the condition  $(\mathcal H\mathcal T_{x_0,s,\theta})$ holds in the triangle $\mathcal T_{x_0,s,\theta}$ if
\begin{center}

$\quad w_{x_0,s,\theta}< 0 \quad $ in $\quad \mathcal{T}_{x_0,s,\theta}$,

$w_{x_0,s,\theta}\leqslant 0 \quad $ on $ \quad \partial(\mathcal{T}_{x_0,s,\theta}) \quad $ and

$ w_{x_0,s,\theta}$ is not identically zero on $\partial(\mathcal{T}_{x_0,s,\theta})$,
\end{center}

with $w_{x_0,s,\theta}$ defined in \eqref{fskhkhgfjhgjhfj}.
\end{definition}

We have the following

\begin{lem}[Small Perturbations]\label{smallperturbations}
Let $u\in C^2(\overline{\R^2_+})$ be a {\rm nonnegative} solution to \eqref{E:P}
 and assume that $f$ fulfills ($h_2$) and  ($h_3$).
Let  $(x_0,s,\theta)$  and $\mathcal{T}_{x_0,s,\theta}$ be  as above and assume that
$(\mathcal H\mathcal T_{x_0,s,\theta})$ holds. Then there exists $\bar\mu = \bar\mu(x_0, s, \theta)>0$ such that
\begin{equation}\label{claimsmallpert}
\begin{cases}
\vert \theta - \theta' \vert + \vert s-s'\vert < \bar\mu, &\\
w_{x_0,s',\theta'}\leqslant 0 \quad \text {on} \,\, \partial(\mathcal{T}_{x_0,s',\theta'}), \hskip4truecm \Longrightarrow \quad (\mathcal H\mathcal T_{x_0,s',\theta'}) \,\, \text {holds}.&\\
w_{x_0,s',\theta'} \quad \text {is not identically zero on} \,\, \partial(\mathcal{T}_{x_0,s',\theta'}) &\\
\end{cases}
\end{equation}
If $u$ is positive, the same result holds assuming only ($h_4$) instead of ($h_3$).
\end{lem}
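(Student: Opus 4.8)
The plan is to show that the hypothesis $(\mathcal H\mathcal T_{x_0,s,\theta})$ is stable under small perturbations of the parameters $(s,\theta)$, by combining the Weak Comparison Principle in domains of small measure (Proposition \ref{maxpri}) with the strong maximum principle / Hopf boundary point behavior encoded in \eqref{conf-forte}. First I would fix a large bound $k$ for $u$ on a fixed compact neighborhood of the closure of the triangle $\mathcal{T}_{x_0,s,\theta}$ — here one uses that $u\in C^2(\overline{\R^2_+})$, so $u$ is bounded on bounded sets — and let $\vartheta=\vartheta(2,k,f)$ be the threshold given by Proposition \ref{maxpri}. By continuity of the dependence of $\mathcal{T}_{x_0,s',\theta'}$ on $(s',\theta')$ (the triangles vary continuously in the Hausdorff sense), for $(s',\theta')$ close to $(s,\theta)$ all these triangles stay inside the fixed compact neighborhood where $0\le u,u_{x_0,s',\theta'}\le k$.

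Next, the core of the argument: I would split the perturbed triangle $\mathcal{T}_{x_0,s',\theta'}$ into a ``good'' compact part $K$ sitting well inside $\mathcal{T}_{x_0,s,\theta}$, where $w_{x_0,s,\theta}<0$ is strictly negative with a quantitative gap $\sup_K w_{x_0,s,\theta}=-\eta<0$, and a thin ``bad'' remainder $\mathcal{T}_{x_0,s',\theta'}\setminus K$ whose Lebesgue measure can be made $<\vartheta$ by choosing $\bar\mu$ small. On the bad part, I would check that $w_{x_0,s',\theta'}\le 0$ on its boundary: part of that boundary lies on $\partial(\mathcal{T}_{x_0,s',\theta'})$, where $w_{x_0,s',\theta'}\le 0$ by hypothesis, and the remaining part lies inside $K$, where $w_{x_0,s',\theta'}$ is close to $w_{x_0,s,\theta}\le -\eta<0$ (uniform continuity of $u$ in the compact neighborhood makes $\|w_{x_0,s',\theta'}-w_{x_0,s,\theta}\|_{\infty}$ small on $K$ once $\bar\mu$ is small). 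Since $w_{x_0,s',\theta'}$ solves $-\Delta w_{x_0,s',\theta'}=f(u)-f(u_{x_0,s',\theta'})$ on the bad part, Proposition \ref{maxpri} applied there (domain of measure $<\vartheta$) yields $w_{x_0,s',\theta'}\le 0$ on the bad part, hence on all of $\mathcal{T}_{x_0,s',\theta'}$.

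Finally, having $w_{x_0,s',\theta'}\le 0$ on $\mathcal{T}_{x_0,s',\theta'}$, the dichotomy \eqref{conf-forte} (valid under $(h_3)$, or under $(h_4)$ when $u>0$) forces either $w_{x_0,s',\theta'}\equiv 0$ or $w_{x_0,s',\theta'}<0$ throughout the triangle. The first alternative is excluded because $w_{x_0,s',\theta'}$ is assumed not identically zero on $\partial(\mathcal{T}_{x_0,s',\theta'})$ (an identically-zero function in the interior would extend by continuity to the boundary). Therefore $w_{x_0,s',\theta'}<0$ in $\mathcal{T}_{x_0,s',\theta'}$, and together with the two remaining boundary hypotheses in \eqref{claimsmallpert} this is precisely $(\mathcal H\mathcal T_{x_0,s',\theta'})$. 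The positive-solution case is identical, invoking $(h_4)$ in place of $(h_3)$ in \eqref{conf-forte}.

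The main obstacle I anticipate is the bookkeeping in the geometric decomposition: one must choose $K$ and $\bar\mu$ in the right order so that simultaneously (a) the measure of $\mathcal{T}_{x_0,s',\theta'}\setminus K$ is below the fixed threshold $\vartheta$, (b) $K\subset\mathcal{T}_{x_0,s,\theta}$ with room to spare so the strict negativity gap $\eta$ is available, and (c) the $C^0$-perturbation of $w$ on $K$ is smaller than $\eta$. Making these choices compatible — in particular, noting that $\vartheta$ depends only on $k$ and $f$ and hence is fixed \emph{before} $\bar\mu$ — is the delicate point, but it is routine once the dependencies are tracked carefully.
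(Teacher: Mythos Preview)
Your proposal is correct and follows essentially the same approach as the paper: fix a uniform bound $k$ on a large ball containing all nearby triangles and their reflections, take $\vartheta=\vartheta(2,k,f)$ from Proposition~\ref{maxpri}, choose a compact $K$ well inside the original triangle (so that $w_{x_0,s,\theta}<0$ there with a gap and the leftover has measure $<\vartheta$), use uniform continuity to keep $w_{x_0,s',\theta'}<0$ on $K$ for small $\bar\mu$, apply Proposition~\ref{maxpri} on $\mathcal{T}_{x_0,s',\theta'}\setminus K$, and conclude via \eqref{conf-forte}. The only cosmetic difference is that the paper organizes the geometry through the nested triangles $\mathcal{T}_{x_0,s\pm\hat\mu,\theta\mp\hat\mu}$ rather than Hausdorff continuity, but the substance and order of dependencies are exactly as you describe.
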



\begin{proof}
Let $R>0$ and $\tilde \mu$ be fixed so that
\begin{equation}\nonumber
\underset{\vert \theta - \theta' \vert + \vert s-s'\vert < \tilde\mu}{\bigcup}\mathcal{T}_{x_0,s',\theta'}\cup T_{x_0,s',\theta'}(\mathcal{T}_{x_0,s',\theta'})\subset B_R(x_0)\,.
\end{equation}
Then we set
\begin{equation}\nonumber
k\,:=\, \underset{\overline {B_R^+(x_0)}}{\max}\,u\,\qquad\text{and}\qquad \vartheta\,=\,\vartheta(N,k,f)
\end{equation}
where $\vartheta(N,k,f)$ is the one appearing in Proposition \ref{maxpri} and $B_R^+(x_0)=B_R(x_0)\cap\{y>0\}$.\\

Now we fix $0<\hat \mu\leq\tilde\mu$ such that
\begin{equation}\label{ppp1}
\mathcal L (\mathcal{T}_{x_0,s+\hat \mu,\theta-\hat\mu}\setminus\mathcal{T}_{x_0,s-\hat \mu,\theta+\hat\mu} ) < \frac{\vartheta}{2}
\end{equation}
and note that
\begin{equation}\nonumber
\mathcal{T}_{x_0,s-\hat \mu,\theta+\hat\mu}\subset
\underset{\vert \theta - \theta' \vert + \vert s-s'\vert < \hat\mu}{\bigcup}\mathcal{T}_{x_0,s',\theta'}\subset \mathcal{T}_{x_0,s+\hat \mu,\theta-\hat\mu} \,.
\end{equation}
Now we can  consider a compact set $K\subset \mathcal{T}_{x_0,s-\hat \mu,\theta+\hat\mu}$ so that
\begin{equation}\label{ppp2}
 \mathcal{L} (\mathcal{T}_{x_0,s-\hat \mu,\theta+\hat\mu} \setminus K) < \frac{\vartheta}{2}\,.
\end{equation}
By assumption we know that $w_{x_0,s,\theta}< 0$ in $\mathcal{T}_{x_0,s,\theta}$  and consequently in the compact set $K$.  Therefore,
by a uniform continuity argument, for some
 $$0<\bar \mu\leq\hat\mu,$$
  we can assume that
  \begin{equation}\label{ppp3}
 w_{x_0,s',\theta'} < 0\quad\text{in}\,\,\,K\qquad \text{for }\quad\vert \theta - \theta' \vert + \vert s-s'\vert < \bar\mu\,.
\end{equation}
  By   \eqref{ppp1} and \eqref{ppp2}  we deduce that
\begin{equation}\nonumber
 \mathcal{L} (\mathcal{T}_{x_0,s',\theta'} \setminus K) < \theta
 \end{equation}
 and,
observing that $w_{x_0,s',\theta'}\leqslant  0$ on $\partial \big(\mathcal{T}_{x_0,s',\theta'}\setminus K\big)$ (see \eqref{ppp3}), we can apply Proposition \ref{maxpri} to get that
\begin{center}
$w_{x_0,s',\theta'}\leqslant  0 \quad $  in $\quad \mathcal{T}_{x_0,s',\theta'}\setminus K$
\end{center}
and therefore in the triangle $\mathcal{T}_{x_0,s',\theta'}$. The desired conclusion


$$w_{x_0,s',\theta'}<  0\qquad \text{in}\quad\mathcal{T}_{x_0,s',\theta'}$$
then follows from \eqref{conf-forte}.

\end{proof}
Now, from small perturbations, we move to larger translations and rotations. We have the following
\begin{lem}[The sliding-rotating technique]\label{largeper}
Let $u\in C^2(\overline{\R^2_+})$ be a {\rm nonnegative} solution to \eqref{E:P}
 and assume that $f$ fulfills ($h_2$) and  ($h_3$).
Let $(x_0,s,\theta )$ be as above and assume that $(\mathcal H\mathcal T_{x_0,s,\theta})$ holds.
 Let $(\hat{s},\hat{\theta})$ be fixed 
and assume that there exists a continuous function $g(t)=(s(t),\theta (t))\, :\,[0\,,\,1] \rightarrow (0, +\infty) \times (0\,,\,\frac{\pi}{2})$, such that
$g(0)=(s,\theta)$ and $g(1)=(\hat{s},\hat{\theta})$.
Assume that
\begin{equation}\label{tttt}
 w_{x_0,s(t),\theta(t)}\leqslant 0\qquad \text{on}\quad\partial(\mathcal{T}_{x_0,s(t),\theta(t)})\qquad
 \text{for every} \quad t\in [0,1)
 \end{equation}

and that
$w_{x_0,s(t),\theta(t)}$ is not identically zero on $\partial(\mathcal{T}_{x_0,s(t),\theta(t)})$  for every $t\in [0,1)$.\\

Then
\begin{center}
$(\mathcal H\mathcal T_{x_0,\hat s,\hat \theta})$ holds.
\end{center}
If $u$ is positive, the same result holds assuming only ($h_4$) instead of ($h_3$).
\end{lem}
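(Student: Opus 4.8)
The plan is to run a continuity (connectedness) argument in the parameter $t \in [0,1]$. Define
\[
A := \{ t \in [0,1] : (\mathcal H\mathcal T_{x_0,s(t),\theta(t)}) \text{ holds} \}.
\]
By hypothesis $0 \in A$, so $A \neq \emptyset$. The strategy is to show that $A$ contains $[0,1)$ and then to obtain the conclusion at $t=1$ by a limiting argument. The key point is that $A \cap [0,1)$ is both open and closed in $[0,1)$: openness is exactly Lemma \ref{smallperturbations} (Small Perturbations), since for $t \in A \cap [0,1)$ we already know from \eqref{tttt} and the non-degeneracy hypothesis that for $t'$ near $t$ the boundary conditions $w_{x_0,s(t'),\theta(t')} \le 0$ on $\partial(\mathcal T_{x_0,s(t'),\theta(t')})$ and non-vanishing there hold, so the implication in \eqref{claimsmallpert} applies and gives $(\mathcal H\mathcal T_{x_0,s(t'),\theta(t')})$.

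For the closedness part, suppose $t_n \in A \cap [0,1)$ with $t_n \to t_\infty \in [0,1)$. I would pass to the limit in the triangles $\mathcal T_{x_0,s(t_n),\theta(t_n)}$, using continuity of $g$ so that $(s(t_n),\theta(t_n)) \to (s(t_\infty),\theta(t_\infty)) \in (0,+\infty)\times(0,\tfrac\pi2)$, hence the triangles converge and $u_{x_0,s(t_n),\theta(t_n)} \to u_{x_0,s(t_\infty),\theta(t_\infty)}$ locally uniformly (indeed in $C^2_{loc}$, by elliptic estimates applied to the equation $-\Delta u_{x_0,s,\theta} = f(u_{x_0,s,\theta})$). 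Thus $w_{x_0,s(t_n),\theta(t_n)} \to w_{x_0,s(t_\infty),\theta(t_\infty)}$ locally uniformly, giving $w_{x_0,s(t_\infty),\theta(t_\infty)} \le 0$ in $\mathcal T_{x_0,s(t_\infty),\theta(t_\infty)}$ and on its boundary. By hypothesis $w_{x_0,s(t_\infty),\theta(t_\infty)}$ is not identically zero on $\partial(\mathcal T_{x_0,s(t_\infty),\theta(t_\infty)})$, so in particular $w_{x_0,s(t_\infty),\theta(t_\infty)} \not\equiv 0$ on the triangle; then the strong comparison alternative \eqref{conf-forte} (which uses $(h_3)$, or $(h_4)$ when $u>0$) upgrades $w_{x_0,s(t_\infty),\theta(t_\infty)} \le 0$ to $w_{x_0,s(t_\infty),\theta(t_\infty)} < 0$ in $\mathcal T_{x_0,s(t_\infty),\theta(t_\infty)}$. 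Hence $t_\infty \in A$, and $A \cap [0,1)$ is closed in $[0,1)$. Since $[0,1)$ is connected, $A \supseteq [0,1)$.

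Finally, to reach $t=1$: apply the same limiting argument as above along any sequence $t_n \nearrow 1$. Using continuity of $g$ at $t=1$ and $C^2_{loc}$ convergence we get $w_{x_0,\hat s,\hat\theta} \le 0$ in $\mathcal T_{x_0,\hat s,\hat\theta}$ and $w_{x_0,\hat s,\hat\theta} \le 0$ on $\partial(\mathcal T_{x_0,\hat s,\hat\theta})$; the hypothesis that $w_{x_0,\hat s,\hat\theta}$ is not identically zero on $\partial(\mathcal T_{x_0,\hat s,\hat\theta})$ is part of what we must verify — it is in fact among the standing assumptions of the lemma as applied (the statement requires non-degeneracy on the boundary for $t \in [0,1)$, and the intended reading is that at $t=1$ the boundary data are inherited in the limit and still non-trivial; if one only has it for $t<1$, non-triviality at $t=1$ follows because $w$ is strictly negative in the open triangle just below $t=1$ and passes to a non-positive, non-identically-zero limit on the closed triangle, forcing non-triviality on the boundary by the maximum principle). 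Then \eqref{conf-forte} gives $w_{x_0,\hat s,\hat\theta} < 0$ in $\mathcal T_{x_0,\hat s,\hat\theta}$, which is precisely $(\mathcal H\mathcal T_{x_0,\hat s,\hat\theta})$.

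I expect the main obstacle to be the closedness step — specifically, making the passage to the limit in the moving triangles rigorous when $(s(t),\theta(t))$ varies (the domains $\mathcal T_{x_0,s(t),\theta(t)}$ change, so one must argue locally on compact subsets of the limiting triangle and control behavior near its moving boundary) and then correctly invoking the strong comparison principle \eqref{conf-forte} to recover strict negativity; one must be careful that the limiting $w$ is genuinely not identically zero, which is where the boundary non-degeneracy hypothesis is essential.
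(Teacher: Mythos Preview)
Your approach is essentially the paper's: a continuation argument along the path $g$, using Lemma~\ref{smallperturbations} for the open step and continuity together with the strong comparison alternative \eqref{conf-forte} for the closed step. The paper phrases it as a $\sup$ argument (set $\bar t := \sup\{\tilde t \in [0,1] : (\mathcal H\mathcal T_{x_0,s(t),\theta(t)})$ holds for all $0\le t\le\tilde t\}$ and derive a contradiction from $\bar t <1$), which is equivalent to your open-and-closed formulation on $[0,1)$. Note also that the passage to the limit in the closed step is simpler than you suggest: since $u\in C^2(\overline{\R^2_+})$ and $u_{x_0,s,\theta}=u\circ T_{x_0,s,\theta}$ with $T_{x_0,s,\theta}$ an affine reflection depending continuously on $(s,\theta)$, the convergence $w_{x_0,s(t_n),\theta(t_n)}\to w_{x_0,s(t_\infty),\theta(t_\infty)}$ is immediate from continuity and no elliptic estimates are needed.

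One caveat on the endpoint: your justification for boundary non-triviality at $t=1$ via ``the maximum principle'' does not work. Knowing $w_{x_0,s(t),\theta(t)}<0$ in the open triangles for $t<1$ does not by itself preclude $w_{x_0,\hat s,\hat\theta}\equiv 0$ on the limiting closed triangle, and even if $w_{x_0,\hat s,\hat\theta}$ were strictly negative somewhere in the interior, no maximum principle forces it to be nonzero on $\partial\mathcal T_{x_0,\hat s,\hat\theta}$ (a function can be strictly negative in the interior and identically zero on the boundary). The paper's own proof, however, simply stops after establishing $\bar t=1$ and does not separately treat the endpoint either; in every application of the lemma in the paper the boundary hypotheses \eqref{tttt} and the non-degeneracy condition actually hold for all $t\in[0,1]$, so the issue is immaterial in practice.
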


\begin{proof}
By the assumptions and exploiting Lemma \ref{smallperturbations} we  obtain the existence of  $\tilde{t}>0$ small such that, for  $0\leqslant t\leqslant \tilde{t}$, $(\mathcal H\mathcal T_{x_0,s(t),\theta(t)})$ holds.


We now set

\begin{center}
$\overline{T}\equiv \,\, \{\tilde{t}\in [0,1]\,\, s.t.\, (\mathcal H\mathcal T_{x_0,s(t),\theta(t)})\,\,\text{  holds for any}\quad 0 \leqslant t\leqslant\tilde{t}\}$
\end{center}
 and $$\bar{t}=\sup\,\overline{T}\,.$$
We claim that actually $\bar{t}=1$. To prove this, assume  $\bar{t}<1$ and note that in this case we have
$$
w_{x_0,s(\bar{t}),\theta(\bar{t})}\le  0 \quad \text{in}\quad  \mathcal{T}_{x_0,s(\bar{t}),\theta(\bar{t})}\,
$$
$$
w_{x_0,s(\bar{t}),\theta(\bar{t})} \le 0 \quad on \quad \partial(\mathcal{T}_{x_0,s(\bar{t}),\theta(\bar{t})})
$$

by continuity, and that $w_{x_0,s(\bar{t}),\theta(\bar{t})}$ is not identically zero on $\partial(\mathcal{T}_{x_0,s(\bar{t}),\theta(\bar{t})})$  by assumption.

Hence, by \eqref{conf-forte}, we see that
$$
w_{x_0,s(\bar{t}),\theta(\bar{t})}< 0 \quad \text{in}\quad  \mathcal{T}_{x_0,s(\bar{t}),\theta(\bar{t})}\,.
$$
Therefore $(\mathcal H\mathcal T_{x_0,s(\bar{t}),\theta(\bar{t})})$ holds and using once again Lemma \ref{smallperturbations}, we can find a sufficiently small $\varepsilon >0$ so that  $(\mathcal H\mathcal T_{x_0, s(t),\theta(t)})$ holds for any
$0\leqslant t \leqslant \bar{t}+\varepsilon$, which contradicts the definition of $\bar{t}$.
\end{proof}

\section{Further preliminary results} \label{sjjkdkgbnnbnbnbnbn}

In this section we assume that $u$ is a nonnegative and non trivial solution of \eqref{E:P}, i.e., $ u \not \equiv 0$.  We also suppose that $f$ fulfills ($h_1$) and, when $ f(0)=0$, also that $f$ fulfills ($h_3$).


\medskip

Given  $x_0 \in \R$, let us set
\begin{equation}\label{QH}
B^+_r(x_0)=B_r(x_0)\cap\{y> 0\}\,
\end{equation}
where $B_r(x_0)$ denotes  the two-dimensional open ball centered at $(x_0,0)$ and of radius $ r>0$.

We claim that, for some $\bar r>0$ and for some $\bar \theta=\bar \theta(\bar r) \in (0, \frac{\pi}{2}) $,

\begin{equation}\label{ccvv}
\frac{\partial\,\,u}{\partial V_\theta} > 0\qquad \text{in }\quad B^+_{\bar r}(x_0)\qquad\text{for}\,\,\,\, - \bar\theta \leq\theta\leq\bar\theta\,.
\end{equation}

\noindent \emph{Case 1: $f(0)<0$}.\\
\noindent Since $\partial_{xx}u(x_0,0) = 0$, we have that
\begin{equation}\nonumber
-\partial_{yy}u(x_0,0)\,=\,-\Delta u (x_0,0)=f(u(x_0,0))=f(0)<0\,.
\end{equation}
 Recalling that $u\in C^2(\overline{\mathbb{R}^2_+})$, we conclude that we can take $\bar r>0$ small such that
\begin{equation}\nonumber
 \partial_{yy}u>0\qquad \text{in }\quad B^+_{\bar r}(x_0).
\end{equation}
Exploiting again the fact that $u\in C^2(\overline{\mathbb{R}^2_+})$, we  can consequently deduce  that
\begin{equation}\label{monteta}
\frac{\partial}{\partial V_\theta}  \left(\frac{\partial\,\,u}{\partial V_\theta} \right)>0\qquad \text{in }\quad B^+_{\bar r}(x_0)\qquad\text{for}\,\,\,\, - \bar\theta \leq\theta\leq\bar\theta\,.
\end{equation}
Also, since we assumed that $u$ is nonnegative in $\mathbb{R}^2_+$, it follows that
\begin{equation}\label{sbvdvjkdbvdvbjkd}
\frac{\partial\,\,u}{\partial V_\theta}(x,0) \geqslant 0\qquad\qquad\text{for any}\,\,\,\, -\bar \theta \leq\theta\leq\bar\theta
 \quad \text{and for any}\,\,\,x\in\R\,.
\end{equation}
Combining \eqref{monteta} and \eqref{sbvdvjkdbvdvbjkd}, we deduce \eqref{ccvv}.\\

\noindent \emph{Case 2: $f(0)\geq0$ and $ u \not \equiv 0$.} \\
In this case we first observe that

\begin{equation}\label{positiva}
\begin{cases}
u >0 \qquad \text{in }\quad \mathbb{R}^2_+,\\
\frac{\partial u}{\partial y} (x,0) > 0  \qquad \forall \, x \in \mathbb{R}.
\end{cases}
\end{equation}
Indeed, if $f(0)>0$ and $u(\overline{x}) = 0 $ with $\overline{x} \in \mathbb{R}^2_+$, then $\Delta u \le 0$ in an open connected neighbourhood of $\overline{x} $ by the continuity of $u$ and $f$. The classical maximum principle and a standard  connectedness argument imply that $ u \equiv 0 $ on $\mathbb{R}^2_+$. The latter contradicts the non triviality of $u$. Therefore $ u>0$ everywhere and the classical Hopf lemma provides the second claim in \eqref{positiva}.

To treat the case $f(0)=0$ we follow the arguments leading to \eqref{eq-conf-forte} and \eqref{conf-forte}.
More precisely, when $f(0)=0$ and $u(\overline{x}) = 0 $, the continuity of $u$ and \eqref{prop-conf-forte} with $\bar t =0$ tell us that $u$ satisfies the inequality $ \Delta u = -f(u) = f(0) -f(u) \le g(u) $ in an open connected neighbourhood of $\overline{x}$.  Then, since $(h_3)$ is in force, we can use the strong maximum principle and the Hopf boundary lemma (see for instance Chapter 5 in \cite{PSB}) to get \eqref{positiva}, as before.

\smallskip
\begin{rem}
Note that, in the previous argument, we used assumption $(h_3)$ only for $\bar{t} =0$ (and only in the case $f(0)=0$).
\end{rem}
\medskip

The desired conclusion \eqref{ccvv} then follows immediately from \eqref{positiva} and the $C^2$-regularity of $u$ up to the boundary.



\medskip

From the  analysis above, we find the existence of (possible very small)

\begin{equation}\label{mons}
\bar s\,=\,\bar s(\bar \theta)>0\,,
\end{equation}
such that, for any $0< s\leqslant \bar{s}$ :

\begin{itemize}
 \item[$i)$] both the triangle $\mathcal{T}_{x_0,s,\bar\theta}$ and its reflection w.r.t. $L_{x_0,s,\bar\theta}$ are contained in $B^+_r(x_0)$ (as well as their reflections w.r.t. the axis $ \{\, x = x_0 \, \}$),
\item[$ii)$] both the segment $\{\, (x_0,y) \, : \,  0 \le y \le s  \, \}$ and its reflection w.r.t. $L_{x_0,s,\theta}$ are contained in $B^+_r(x_0)$ for every $\theta \in (0, \bar\theta]$,
\item[$iii)$] $u < u_{x_0,s,\bar\theta}$ in $\mathcal{T}_{x_0,s,\bar\theta}$,
\item[$iv)$]  $u \leqslant u_{x_0,s,\theta}$ on $\partial(\mathcal{T}_{x_0,s,\theta})$ for every $\theta \in (0, \bar\theta]$,
\item[$v)$]  $u < u_{x_0,s,\theta}$ on the set $ \{\, (x_0,y) \, : \,  0 < y < s  \, \}$, for every $\theta \in (0, \bar\theta]$.
\end{itemize}





Note that, from $iii)-iv)$, we have that
\begin{equation}\label{Hinizio}
\forall \, s \in (0, \bar s), \qquad \mathcal (\mathcal H\mathcal T_{x_0,s,\bar\theta}) \qquad holds.
\end{equation}

\medskip

To continue the description of our results, we denote by $p\,:=\,(x,y)$ a general point in the plane and, for a nonnegative solution $u$ of \eqref{E:P}, we say that $u$ satisfies the property $(\mathcal P_\mu)$ if {\it there exists a real number $ \mu>0$ and a point  $p\in\{y=\mu\}$ such that $u(p)\neq 0$. }

Equivalently :
\begin{equation}\nonumber
(\mathcal P_\mu)\qquad\text{holds if}\qquad\{y=\mu\}\cap\{u\neq0\}\neq\emptyset\,.
\end{equation}

For a non trivial $u$ and under the assumptions stated at the beginning of this section we have that the set

\begin{equation}\label{LAMBDA}
\Lambda^*=\Lambda^*(u)\,:=\,\{\lambda>0\,\,:\,\,(\mathcal P_\mu)\quad\text{holds for every} \,\, 0<\mu\leq\lambda\}
\end{equation}

is not empty. The latter claim follows from \eqref{positiva} when $ f(0) \ge 0$ and from Theorem 6.1 of \cite{FS0} when $f(0) <0$ (note that Theorem 6.1 of \cite{FS0} holds true for functions $f$ which are {\it only} continuous on $ [0, +\infty)$ and so, it applies in our situation since $(h_1)$ is in force).

Therefore we set
\begin{equation}\label{lambda}
\lambda^*=\lambda^*(u)\,:=\, \sup \Lambda^*\, \in (0, +\infty]
\end{equation}

and also note that, by a continuity argument, if $\lambda^*$ is finite, we get that  $\{y=\lambda^*\}\subseteq \{u=0\}$.

\bigskip

\noindent Next we prove a result that allows to start the moving plane procedure.

\begin{lem}[Monotonicity near the boundary]\label{starting}
Let $u\in C^2(\overline{\R^2_+})$ be a {\rm nonnegative}  and {\rm non trivial} solution to \eqref{E:P}
and assume that $f$ is locally Lipschitz continuous on $[0, +\infty).$

Then there exists $\hat \lambda>0$
such that, for any $0<\lambda\leq \hat\lambda$, we have
\begin{equation}\label{sdhsshkshkdvjbvj}
u<u_\lambda\qquad\text{in}\quad \Sigma_\lambda\,.
\end{equation}
Furthermore
\begin{equation}\label{monotHAT}
\partial_y u >0\qquad\text{in}\quad \Sigma_{\hat\lambda}\,.
\end{equation}

\medskip

If $u$ is positive, the  conclusions above hold when either

\item[i)] $f(0) =0$ and $f$ fulfills ($h_1$),  ($h_2$) and ($h_3$),

or

\item[ii)] $f(0)\neq0$ and $f$ fulfills ($h_1$),  ($h_2$) and ($h_4$).

\end{lem}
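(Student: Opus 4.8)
The plan is to run the moving-plane method in the "thin strip" regime, exploiting the Weak Comparison Principle in domains of small measure (Proposition \ref{maxpri}) together with the strong comparison principle encoded in \eqref{conf-forte}. Fix $ x_0 \in \R$ (the argument will be uniform in $x_0$). Using the notation $ \Sigma_\lambda = \{0 < y < \lambda\} $ and $ u_\lambda(x,y) = u(x, 2\lambda - y) $, the first step is to establish \eqref{sdhsshkshkdvjbvj} for small $\lambda$. To do this I would first fix a reference ball $ B^+_R(x_0) $, set $ k := \max_{\overline{B^+_R(x_0)}} u $ and let $ \vartheta = \vartheta(N,k,f) $ be the threshold from Proposition \ref{maxpri}. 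Choosing $ \hat\lambda $ small enough that $ \mathcal L(\Sigma_{2\hat\lambda} \cap B_R(x_0)) < \vartheta $ on the relevant bounded pieces, one applies Proposition \ref{maxpri} to $ u $ and $ u_\lambda $ on $ \Sigma_\lambda $ (intersected with a large ball, after a standard truncation/exhaustion argument to deal with the unboundedness of the strip — here one uses $ u \le u_\lambda $ on $ \partial \Sigma_\lambda $, which holds on $ \{y=0\} $ since $ u = 0 $ there and $ u_\lambda \ge 0 $, and on $ \{y = \lambda\} $ trivially since $ u_\lambda = u $ there). This gives $ u \le u_\lambda $ in $ \Sigma_\lambda $, and then \eqref{conf-forte} (applied with $\theta$ corresponding to horizontal lines, i.e. the strip reflection, or more directly the strong maximum principle / Hopf argument behind it) upgrades this to the strict inequality $ u < u_\lambda $ in $ \Sigma_\lambda $, using $(h_3)$ (resp. $(h_4)$ in the positive case), provided $ w_\lambda = u - u_\lambda \not\equiv 0 $; the latter is guaranteed for $\lambda$ small because near $ \partial \R^2_+ $ the solution is not identically zero (non-triviality of $u$ combined with the boundary analysis already carried out in Section \ref{sjjkdkgbnnbnbnbnbn}, in particular \eqref{positiva} when $f(0)\ge 0$, and the $C^2$ behavior when $f(0)<0$).

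Once \eqref{sdhsshkshkdvjbvj} is in hand, the monotonicity \eqref{monotHAT} follows by the usual differentiation trick: for $ 0 < \lambda \le \hat\lambda $ and any point $ p = (x, y_0) \in \Sigma_{\hat\lambda} $, pick $ \lambda \in (y_0, \hat\lambda] $; then $ u(x, y_0) < u(x, 2\lambda - y_0) $, i.e. $ u $ is strictly smaller at $ y_0 $ than at its reflection $ 2\lambda - y_0 > y_0 $. Letting $ \lambda \downarrow y_0 $ and dividing by $ 2(\lambda - y_0) $ yields $ \partial_y u(x, y_0) \ge 0 $; strictness comes either from the strong maximum principle applied to $ \partial_y u $ (which solves the linearized equation $ -\Delta(\partial_y u) = f'(u)\partial_y u $ when $f$ is locally Lipschitz, so $f'$ exists a.e. and the linearized inequality makes sense) inside $ \Sigma_{\hat\lambda} $, noting $ \partial_y u \ge 0 $ and $ \partial_y u(x,0) > 0 $ from \eqref{positiva} or from Case 1 of Section \ref{sjjkdkgbnnbnbnbnbn}, or directly from the fact that the strict inequality $ u < u_\lambda $ combined with Hopf's lemma on the reflected configuration forces $ \partial_y u > 0 $ on $ \{y = \lambda\} $ for every $ \lambda \le \hat\lambda $, which sweeps out all of $ \Sigma_{\hat\lambda} $.

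For the positive-solution case under hypotheses i) or ii), the argument is identical except that Proposition \ref{maxpri} still only needs $(h_2)$, while the passage from $ w_\lambda \le 0 $ to $ w_\lambda < 0 $ now invokes the strong comparison principle via $(h_3)$ when $f(0)=0$ (here $\bar t = 0$ is allowed) and via $(h_4)$ when $f(0) \ne 0$ (where only $ \bar t > 0 $ is needed, which suffices precisely because $ u > 0 $ so all relevant values $ \overline t = u(\overline x) $ lie in $ (0, +\infty) $ — this is exactly why $(h_4)$ is enough in the positive case). The boundary analysis guaranteeing $ w_\lambda \not\equiv 0 $ for small $\lambda$ is supplied by \eqref{positiva} together with the $ C^2 $ regularity up to $ \{y=0\} $.

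The main obstacle is not any single deep estimate but the careful handling of the \emph{unboundedness} of the strip $ \Sigma_\lambda $: Proposition \ref{maxpri} requires a domain of finite (small) measure, so one cannot apply it to the whole strip at once. The fix is to localize — work on $ \Sigma_\lambda \cap B_R(x_0) $, use that on the new portion of the boundary one can still compare (or use a cutoff and let $ R \to \infty $, controlling the extra terms by the $ L^2 $-smallness coming from the Poincaré inequality on a thin strip) — and to make sure all constants ($ k $, $ \vartheta $, $ \hat\lambda $) can be chosen uniformly in $ x_0 $, which they can because $ u \in C^2(\overline{\R^2_+}) $ and the geometry near $ \{y=0\} $ is translation-invariant. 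Keeping this localization clean, and simultaneously ensuring the non-degeneracy $ w_\lambda \not\equiv 0 $, is the technical heart of the proof.
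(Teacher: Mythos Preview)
Your plan has a genuine gap precisely at the point you yourself flag as ``the main obstacle.'' Proposition~\ref{maxpri} is a comparison principle in domains of \emph{small measure}, and its proof needs a uniform a priori bound $0\le u,v\le k$ so that the Lipschitz constant $L_k$ is fixed. The strip $\Sigma_\lambda$ has infinite measure, and $u$ is \emph{not} assumed bounded. Your two proposed fixes both fail: (i) localizing to $\Sigma_\lambda\cap B_R(x_0)$ creates an artificial lateral boundary on which there is no reason whatsoever to have $u\le u_\lambda$; (ii) your claim that ``all constants ($k$, $\vartheta$, $\hat\lambda$) can be chosen uniformly in $x_0$ because $u\in C^2(\overline{\R^2_+})$'' is simply false---$C^2$ regularity gives no global bound on $u$, so $k=\max_{\overline{B^+_R(x_0)}}u$ may blow up with $R$ or $x_0$, and with it $\vartheta$ shrinks. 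A cutoff argument would likewise need global control of $u$ (or of $(u-u_\lambda)^+$) to absorb the gradient-of-cutoff terms; no such control is available.

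The paper's proof sidesteps this entirely and uses the rotating-triangle machinery set up in Section~\ref{preliminaries}, which is exactly the missing idea. Starting from the initial configuration \eqref{Hinizio}, for fixed $s\in(0,\bar s)$ one applies Lemma~\ref{largeper} along the path $g(t)=(s,\,t\theta'+(1-t)\bar\theta)$ to rotate the reflecting line from slope $\tan\bar\theta$ down to any $\tan\theta'>0$. At every stage the triangle $\mathcal T_{x_0,s,\theta(t)}$ is \emph{bounded}, so Proposition~\ref{maxpri} applies (via Lemma~\ref{smallperturbations}) with a local bound $k$ depending only on a ball containing that triangle and its reflection; no global bound on $u$ is needed. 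Letting $\theta'\to 0$ and using continuity gives $u\le u_s$ on the half-strip $\Sigma_s\cap\{x\le x_0\}$; the other half comes by reflecting across $\{x=x_0\}$. Strictness then follows from the property $(\mathcal P_\lambda)$ (so $u\not\equiv u_\lambda$) and the strong comparison \eqref{conf-forte}. Your argument for \eqref{monotHAT} via Hopf at $\{y=\lambda\}$ is fine and matches the paper's \eqref{Hopf-movingplane}, but the first step---getting $u\le u_\lambda$ on the unbounded strip---requires the rotating-triangle idea, not a direct thin-strip comparison.
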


\begin{proof}
Let $\bar\theta$ given by \eqref{monteta} and $\, \bar s=\bar s(\bar\theta)$
 as in \eqref{mons}. We showed  that, for any $0<s<\bar s$,
$(\mathcal H\mathcal T_{x_0,s,\bar\theta})$ holds.

We use now Lemma \ref{largeper} as follows: for any fixed $s \in (0, \bar s)$ and $ \theta' \in (0, \bar\theta) $
we consider the rotation
\[
g(t)\,=\,(s(t),\theta (t))\,:=\,(s\,,\,t\theta'+(1-t)\bar\theta)\qquad\quad t\in[0\,,\,1]\,.
\]
Recalling that $(\mathcal H\mathcal T_{x_0,s,\bar\theta})$ holds by \eqref{Hinizio}, we deduce  that also
$(\mathcal H\mathcal T_{x_0,s,\theta'})$ holds.
Therefore, by the fact that $0<\theta'<\bar\theta$ is arbitrary and by a continuity argument, we pass to the limit for $\theta'\rightarrow 0$ and get
\begin{center}
 $u(x,y) \le  u_s(x,y)$ in $\Sigma_{s}\cap \{x\leqslant x_0\}$ for  $0<s<\bar s$.
 \end{center}


The invariance of the considered problem w.r.t. the axis $\{\, x = x_0 \,\} $ enables us to use the same argument to treat the case of negative $\theta$, yielding

\begin{center}
 $u(x,y) \le u_s(x,y)$ in $\Sigma_{s}\cap \{x\geqslant x_0\}$ for  $0<s<\bar s$,
 \end{center}
possibly reducing $\bar s$.

Thus $u(x,y) \le u_s(x,y)$ in $\Sigma_{s}$ for every $ s \in (0,\bar s)$.  The desired conclusion \eqref{sdhsshkshkdvjbvj} then follows by taking $\hat\lambda$ such that $  0 < \hat\lambda < \min \{\bar s, \frac{\lambda^*}{2}\}$. Here we have used in a crucial way that the property $(\mathcal P)_{\lambda}$ holds for every $ \lambda \in (0, \hat\lambda]$, so that the case $u\equiv u_{\lambda}$ in $\Sigma_{\lambda}$ is not possible.

Moreover, when $f$ is locally Lipschitz continuous on $(0, +\infty]$, the function $ u_{\lambda} -u>0$ solves a linear equation of the form $ \Delta  (u_{\lambda} -u) = c(x) (u_{\lambda} -u)$, with $c$ locally bounded on $\Sigma_{\lambda}$. Therefore, by the Hopf's Lemma, for every $ \lambda \in (0, \hat\lambda]$ and every $x \in \R$, we get
\begin{equation}\label{Hopf-movingplane}
- 2\partial_y u(x, \lambda)=\frac{\partial (u_{\lambda}-u)}{\partial y}(x, \lambda)<0\,.
\end{equation}
The latter proves \eqref{monotHAT} when $f$ is locally Lipschitz continuous on $(0, +\infty]$.

If $u$ is everywhere positive, \eqref{Hopf-movingplane} is still true since $(h_3)$ (resp. $(h_4)$) is in force. Indeed, the arguments already used to prove \eqref{conf-forte} and \eqref{positiva} and the crucial fact that $ u(x,\lambda)  > 0 $ for every $x \in \mathbb{R}$ lead to

\begin{equation}\label{Hopf-mp2}
\begin{cases}
\Delta ( u_{\lambda}-u) \leq g(u_{\lambda}-u) & \qquad\text{in}\quad B_x, \\
u_{\lambda}-u > 0 & \qquad\text{in}\quad B_x, \\
u_{\lambda}(x,\lambda) - u(x,\lambda)  = 0 & \qquad \forall \, x \in \mathbb{R},
\end{cases}
\end{equation}
where $B_x \subset \mathbb{R}^2$ is an open ball centered at $(x,\lambda)$. Therefore, since $(h_3)$ (resp. $(h_4)$) is in force, the boundary lemma gives \eqref{Hopf-movingplane}. This concludes the proof.

\end{proof}

\medskip

\begin{rem}
Note that, when $u$ is positive, we used only  $(h_4)$.
\end{rem}

\medskip

Let $\lambda^*$ be defined as in \eqref{lambda}.
 In the case $\lambda^*=\infty$ we set
$$\Lambda=\{\lambda>0\, :\,u<u_{\lambda '}\quad\text{in}\,\,\Sigma_{\lambda'}\,\,\,\,\forall \lambda'<\lambda\}\,.$$
If $\lambda^*$ is finite we use the same notation but considering values of $\lambda$ such that $0<\lambda<\lambda^*/2$, namely
$$\Lambda=\{\lambda<\frac{\lambda^*}{2}\, :\,u<u_{\lambda '}\quad\text{in}\,\,\Sigma_{\lambda'}\,\,\,\,\forall \lambda'<\lambda\}\,.$$
By Lemma \ref{starting} we know that $\Lambda$ is not empty and we can define
\begin{equation}\label{hhhhhhhhhh}
\bar{\lambda}=\sup\,\,\Lambda\,.
\end{equation}

Now we assume that $\bar\lambda<+\infty$, when $ \lambda^* = \infty$ (resp. $\bar\lambda<\frac{\lambda^*}{2}$, when $\lambda^*$ is finite) and observe that, arguing as above and under the same assumptions of Lemma \ref{starting} (cfr. the proof of \eqref{Hopf-movingplane}), we deduce that

\begin{equation}
u < u_{\bar{\lambda}} \qquad\text{on}\quad\Sigma_{\bar{\lambda}},
\end{equation}

\begin{equation}\label{monot-limite}
\partial_y u(x,\lambda) \,>\,0\qquad \forall \, (x, \lambda) \in \mathbb{R} \times (0, \bar{\lambda}].
\end{equation}




and then we can prove the following

\begin{lem}\label{caffbe} Let $u$ and $f$ as in Lemma \ref{starting}. Let $\lambda^*$ and $\bar\lambda$ be as above. Assume that there is a point  $x_0\in\mathbb{R}$ satisfying $u(x_0,2\bar\lambda)>0$. Then there exists $\bar \delta >0$ such that:
for any $-\bar\delta\leqslant \theta\leqslant\bar\delta $ and for any
$0< \lambda\leqslant\bar{\lambda}+\bar\delta$, we have
 $$u(x_0,y)<u_{x_0,\lambda,\theta}(x_0,y)\,,$$
for $0< y<\lambda$.
\end{lem}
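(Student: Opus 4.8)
The plan is to combine the monotonicity information already available on $\{y = \bar\lambda\}$ with a perturbation argument analogous to the one used in Lemma \ref{smallperturbations}, but now localized near the vertical segment $\{(x_0,y)\,:\,0<y<\bar\lambda\}$. First I would observe that, by \eqref{monot-limite}, we have $\partial_y u(x,\lambda)>0$ for all $(x,\lambda)\in\mathbb{R}\times(0,\bar\lambda]$, and since $u(x_0,2\bar\lambda)>0$ the point $2\bar\lambda$ lies above $\lambda^*$ is \emph{not} forced; rather, the hypothesis $u(x_0,2\bar\lambda)>0$ together with $\partial_y u>0$ up to level $\bar\lambda$ and continuity forces $u(x_0,y)>0$ for all $y>0$ sufficiently — more precisely, strict monotonicity on $(0,\bar\lambda]$ gives $u(x_0,y)>0$ for $y\in(0,\bar\lambda]$, hence $u>u_{x_0,\lambda,0}$ on the segment $\{(x_0,y):0<y<\lambda\}$ for every $\lambda\in(0,\bar\lambda]$ by the reflection being strictly below. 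This handles the case $\theta = 0$ and $\lambda\le\bar\lambda$, which is the "base case" of the perturbation.

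Next I would extend to $\lambda$ slightly larger than $\bar\lambda$ and to small $|\theta|$. The key point is that for $\theta=0$ the reflected point $T_{x_0,\lambda,0}(x_0,y)=(x_0,2\lambda-y)$ and the inequality $u(x_0,y)<u(x_0,2\lambda-y)$ for $0<y<\lambda$ is exactly the statement $u<u_\lambda$ restricted to the vertical line, which we know strictly on $(0,\bar\lambda]$ and which extends to $\lambda=\bar\lambda+\bar\delta$ for small $\bar\delta$ by the strict inequality $u<u_{\bar\lambda}$ on $\Sigma_{\bar\lambda}$ (guaranteed above), continuity of $u$ up to the boundary, and the fact that $u(x_0,2\bar\lambda)>0$ while $u(x_0,0)=0$: a compactness/uniform-continuity argument on the compact segment $\{(x_0,y):\varepsilon\le y\le\bar\lambda\}$ preserves the strict inequality under a small perturbation of both $\lambda$ and $\theta$, while near the endpoint $y=0$ one uses the Hopf-type strict sign $\partial_y u(x_0,0)>0$ (from \eqref{positiva} / the boundary lemma) to control the behaviour as $y\to 0^+$. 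Rotating the line by a small angle $\theta$ only moves $T_{x_0,\lambda,\theta}(x_0,y)$ by $O(|\theta|)$ uniformly on the relevant compact set, so the same uniform-continuity argument absorbs it.

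The main obstacle I expect is handling the two endpoints of the segment simultaneously and uniformly: near $y=\lambda$ the quantity $u(x_0,y)-u_{x_0,\lambda,\theta}(x_0,y)$ vanishes to first order (both points coincide on the line $L_{x_0,\lambda,\theta}$), so one cannot merely invoke continuity of a strict inequality on a compact set; instead one must use that the \emph{normal} derivative $\partial u/\partial V_\theta$ is strictly positive in a neighbourhood of the line — this is precisely \eqref{ccvv} together with \eqref{monot-limite} — which gives, for $y$ close to $\lambda$, a strict decay $u(x_0,y) - u_{x_0,\lambda,\theta}(x_0,y) < 0$ by a first-order Taylor expansion along $V_\theta$. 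Near $y=0$ the analogous issue is resolved because $u_{x_0,\lambda,\theta}(x_0,0)=u(T_{x_0,\lambda,\theta}(x_0,0))>0$ (the reflected point lies strictly inside $\mathbb{R}^2_+$ by item $ii)$ of the list after \eqref{mons} for $\theta$ small, adapted to level $\lambda$) while $u(x_0,0)=0$, so the inequality is strict at $y=0$ with a margin. The middle range $\varepsilon\le y\le\lambda-\varepsilon$ is then the routine compact uniform-continuity step. I would assemble these three regimes, choose $\bar\delta>0$ small enough to make all three work, and conclude.

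\medskip

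\emph{Remark on what is genuinely needed.} Once $\theta=0$, $\lambda\le\bar\lambda$ is settled from \eqref{monot-limite} and the non-triviality hypothesis $u(x_0,2\bar\lambda)>0$, the only new ingredient is a \emph{local} perturbation lemma for the single vertical line $\{x=x_0\}$, which is strictly weaker than Lemma \ref{smallperturbations} (no measure-of-domain / weak comparison machinery is required, only continuity of $u$ and $\nabla u$ up to the boundary and the sign information \eqref{ccvv}, \eqref{positiva}, \eqref{monot-limite}). This is why the statement of Lemma \ref{caffbe} restricts the conclusion to the line $\{x=x_0\}$ rather than to a full triangle.
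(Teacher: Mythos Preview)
Your plan is correct and rests on the same three inputs as the paper's proof: the strict inequality $u<u_{\bar\lambda}$ on $\Sigma_{\bar\lambda}$, the derivative sign \eqref{monot-limite}, and the near-boundary control \eqref{ccvv}. The difference is only in packaging. You propose a direct three-regime decomposition of the segment (near $y=0$, middle, near $y=\lambda$) with explicit uniform-continuity and Taylor-expansion arguments; the paper instead runs a short contradiction. Assuming the conclusion fails for a sequence $(\lambda_n,\theta_n,y_n)$ with $|\theta_n|+|\lambda_n-\bar\lambda|^+\to 0$, one passes to a limit $(\tilde\lambda,0,\tilde y)$; the known strict inequality $u<u_{\lambda'}$ on $\Sigma_{\lambda'}$ for $\lambda'\le\bar\lambda$, together with $u(x_0,0)=0<u(x_0,2\bar\lambda)$, forces $\tilde y=\tilde\lambda$; the mean value theorem along the direction $V_{\theta_n}$ then yields a point where $\partial u/\partial V_{\theta_n}\le 0$, and in the limit $\partial_y u(x_0,\tilde\lambda)\le 0$, contradicting \eqref{monot-limite}. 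This compactness route absorbs the uniformity in $\lambda\in(0,\bar\lambda+\bar\delta]$ automatically, whereas your decomposition has to secure it by hand (small $\lambda$ falling entirely inside $B^+_{\bar r}(x_0)$ via \eqref{ccvv}, larger $\lambda$ handled by compactness). One small imprecision in your sketch: invoking \eqref{positiva} for the behaviour near $y=0$ covers only $f(0)\ge 0$; in the case $f(0)<0$ one does not have $\partial_y u(x_0,0)>0$ in general and must use \eqref{ccvv}/\eqref{monteta}--\eqref{sbvdvjkdbvdvbjkd} instead (the paper's argument handles both cases uniformly). Also note a sign slip in your first paragraph: you want $u<u_{x_0,\lambda,0}$, not $u>u_{x_0,\lambda,0}$.
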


\begin{proof}
First we note that, by \eqref{monot-limite}, we have $\partial_y u (x_0,\bar{\lambda})>0$.


We argue now by contradiction. If the lemma were false, we found a sequence of small $\delta _n\rightarrow 0$
and
$-\delta _n \leqslant\theta_n\leqslant \delta _n$,
 $0<\lambda_n\leqslant \bar{\lambda}+\delta _n$,
 $0< y_n<  \lambda_n$ with $$u(x_0,y_n)\geqslant u_{x_0,\lambda_n ,\theta_n}(x_0,y_n).$$

Possibly considering subsequences,  we  may and do assume that $\lambda_n\rightarrow \tilde{\lambda}\leqslant \bar{\lambda}$. Also
$y_n\rightarrow \tilde{y}$ for some $\tilde{y}\leqslant \tilde{\lambda}$. Considering the construction of $B^+_{\bar r}(x_0)$ as above and in particular taking into account \eqref{monteta} and \eqref{sbvdvjkdbvdvbjkd}, we deduce that $\tilde\lambda>0$ and,
by continuity, it follows that $u(x_0,\tilde{y})\geqslant u_{\tilde{\lambda}}(x_0,\tilde{y})$.
Consequently
$y_n\rightarrow \tilde{\lambda}=\tilde{y}$, since  we know that $u<u_{\lambda '}$  in $\,\Sigma_{\lambda '}$ for any $\lambda'\leqslant \bar{\lambda}$ and
we assumed that $u(x_0,2\bar\lambda)>0$ so that in particular
$u(x_0,0)=0<u(x_0,2\bar\lambda)$.
By the mean value theorem since $u(x_0,y_n)\geqslant u_{x_0,\lambda_n ,\theta_n}(x_0,y_n)$, it follows
$$\frac{\partial u}{\partial V_{\theta_n}}(x_n,y_n)\leqslant 0$$
at some point $\xi_n\equiv (x_n,y_n)$ lying on  the line from $(x_0,y_n)$ to $T_{x_0,\lambda_n,\theta_n}(x_0,y_n)$, recalling that
the vector $V_{\theta_n}$ is orthogonal to the line $L_{x_0,\lambda_n,\theta_n}$.  Since $V_{\theta_n}\rightarrow e_2$ as $\theta_n\rightarrow 0$.\\
Taking the limit it follows
$$\partial_y u (x_0,\tilde{\lambda})\leqslant 0$$
which is impossible by \eqref{monot-limite}.
\end{proof}

\section{Proof of Theorem \ref{teo-doppio}} \label{fine-teo-doppio}

\begin{proof}[\underline{Proof of Theorem \ref{teo-doppio}}]

Since we are assuming that $\bar\lambda<+\infty$, when $ \lambda^* = \infty$ (resp. $\bar\lambda<\frac{\lambda^*}{2}$, when $\lambda^*$ is finite), by definition of $\lambda^*$ we can find $x_0 \in \R$ such that $u(x_0,2\bar\lambda)>0$. Let $B^+_{\bar r}(x_0)$ be constructed as above and pick $\bar\theta$ given by \eqref{monteta}.

Let also $\bar \delta $ as in Lemma \ref{caffbe}. Then fix $\theta_0>0$ with $\theta_0\leqslant \bar \delta$ and $\theta_0\leqslant\bar\theta$. Let us set
\[
s_0\,:=\,s_0(\theta_0)\,,
\]
such that  the triangle $\mathcal T_{x_0,s_0,\theta_0}$ and its reflection w.r.t. $L_{x_0,s_0,\theta_0}$ is contained in $B^+_{\bar r}(x_0)$ and consequently $(\mathcal H\mathcal T_{x_0,s_0,\theta_0})$ holds. It is convenient to assume that $s_0\leqslant \hat\lambda$  with $\hat\lambda$ as in Lemma \ref{starting}.
For any
\[
s_0<s\leqslant\bar\lambda+\bar\delta,\qquad 0<\theta<\theta_0\,,
\]
we carry out the \emph{sliding-rotating technique} exploiting
Lemma \ref{largeper} with
\[
g(t)\,=\,(s(t),\theta (t))\,:=\,(ts+(1-t)s_0\,,\,t\theta+(1-t)\theta_0)\qquad\quad t\in[0\,,\,1]\,.
\]
By Lemma \ref{caffbe} we deduce that the boundary conditions required to apply Lemma \ref{largeper} are fulfilled and therefore, by Lemma \ref{largeper}, we get that $(\mathcal H\mathcal T_{x_0,s,\theta})$ holds.  We can now argue as in the proof of Lemma \ref{starting} and deduce that
 $u(x,y)<u_\lambda(x,y)$ in $\Sigma_{\lambda}$  for any $0<\lambda\leqslant \bar\lambda+\bar\delta$. This provides a contradiction unless $\bar\lambda=+\infty$
 (resp.
$\bar\lambda=\,\frac{\lambda^*}{2}$, if $\lambda^*$ is finite). Arguing e.g. as in the proof of Lemma \ref{starting}, we deduce
 \[
\partial_y u >0\qquad\text{in}\quad\mathbb{R}^2_+\qquad\,\,\,\text{if}\quad\lambda^*=+\infty\,,
 \]
while
 \begin{equation}\nonumber
\partial_y u > 0\qquad \text{in}\quad\Sigma _{\frac{\lambda^*}{2}}\qquad\text{if}\quad\lambda^*<+\infty\,.
\end{equation}
As a consequence of the monotonicity result, we deduce that $u$ is positive in $\mathbb{R}^2_+$ if $\lambda^*=+\infty$.\\

\noindent Let us now deal with the case when  $\lambda^*$ is finite, that may occur only in the case $f(0)<0$.
We deduce by continuity that
\[
u\leq u_{\lambda^*/2} \qquad \text{in}\quad\Sigma_{\lambda^*/2}\,.
\]
By the strong comparison principle, we deduce that: either  $u< u_{\lambda^*/2}$ or $u\equiv u_{\lambda^*/2}$, in $\Sigma_{\lambda^*/2}$.
Note that, by the definition of $\lambda^*$, we have that $\{y=\lambda^*\}\subseteq \{u=0\}$, that also implies $\{y=\lambda^*\}\subseteq \{\nabla u=0\}$ since $u$ is nonnegative. If $u< u_{\lambda^*/2}$  in $\Sigma_{\lambda^*/2}$,
we get by the Hopf's boundary Lemma (see \cite{GT}) that $\partial_y (u_{\lambda^*/2}-u)>0$  on $\{y=0\}$. Since $\partial_y (u_{\lambda^*/2})=0$ on $\{y=0\}$ (by the fact that $\{y=\lambda^*\}\subseteq \{\nabla u=0\}$) this provides a contradiction with the fact that $u$ is nonnegative. Therefore it occurs $u\equiv u_{\lambda^*/2}$, in $\Sigma_{\lambda^*/2}$. \\

Note now that, since
$
\{y=\lambda^*\}\subseteq \{u=0\}\cap\{\nabla u=0\}\,,
$
by symmetry we deduce
\[
\{y=0\}\subseteq \{u=0\}\cap\{\nabla u=0\}\,.
\]
Therefore we deduce that $u$ is one-dimensional by the \emph{unique continuation principle} (see for instance Theorem 1 of \cite{FVb} and the references therein). Here we use in a crucial way the fact that $f$ is locally Lipschitz continuous on $[0, +\infty)$. Indeed, for every $t \in \R$, the function $u^t(x,y) : = u(x+t,y) $ is a nonnegative solution of \eqref{E:P} with $ u^t = \nabla u^t = 0$ on $ \partial \R^2_+$ and the unique continuation principle implies that $ u \equiv u^t $ on $\R^2_+$. This immediately gives that $u$ depends only on the variable $y$, i.e.,
\[
u(x,y)\,=\,u_0(y) \qquad \forall \, (x,y) \in \R^2_+
\]

where $u_0 \in C^2([0, +\infty)) $ is the unique solution of $u_0''+f(u_0)=0$ with $u_0'(0)=u_0(0)=0$.

The remaining part of the statement, namely the properties of $u_0$, follows by a simple ODE analysis.
\end{proof}

\section{Proof of Theorem \ref{casteorem} and Theorem \ref{T:3spe}}

\begin{proof}[\underline{Proof of Theorem \ref{casteorem}}]
Since $u$ is positive we immediately have that  $\lambda^*=\infty$.
We now observe that, when $u$ is positive, the first part of the proof of Theorem \ref{teo-doppio} holds under the assumptions of Theorem \ref{casteorem} (see Lemma \ref{largeper}, Lemma \ref{starting} and Lemma \ref{caffbe}).
Therefore, arguing as in Theorem \ref{teo-doppio}, we get
\[
\partial_y u >0\qquad\text{in}\quad\mathbb{R}^2_+\, .
 \]
\end{proof}

\begin{proof}[\underline{Proof of Theorem \ref{T:3spe}}]
The proof follows arguing exactly as in the proof of Theorem \ref{casteorem}, just observing that the translating rotating technique can be performed
until we reach the maximal position at the middle of the strip. This provides the fact that
 $u$ is strictly monotone increasing in $\Sigma_{b}$.
 To prove that
$
\frac{\partial u}{\partial y} >0$ in  $\Sigma_{b}$  just argue again as in the  proof of \eqref{Hopf-movingplane} (see also \eqref{conf-forte}).
If $u\in C^2(\overline{\Sigma_{2b}})$
and $u=0$ on $\partial \Sigma_{2b}$, then the technique can be applied in the opposite direction thus proving that
 $u$ is  symmetric about $\{y=b\}$.
\end{proof}

\section{Proof of Theorem \ref{teo-sim}, Corollary \ref{CorClas} and Theorem \ref{teo-clas2}}

\begin{proof}[\underline{Proof of Theorem \ref{teo-sim} }]
Since $f$ is locally Lipschitz continuous on $[0,+\infty)$, Theorem \ref{teo-doppio} implies that, either $u$ is one-dimensional and periodic (possibly identically equals to zero) and in this case we are done, or $u>0$ and $ \frac{\partial u}{\partial y} >0$ everywhere in $\R^2_+$. To conclude  the proof it remains to consider the second case. First we observe that $u$ is necessarily bounded on $\R^2_+$, indeed by Theorem 2.1 of \cite{PQS} there is a positive constant $C$, depending only on $p,f$ and the euclidean dimension, such that
\begin{equation}\label{stima}
u(x,y)  
\le C ( 1 + dist^{-\frac{2}{p-1}}((x,y)\, ; \, \partial \R^2_+)) =
C ( 1 + y^{-\frac{2}{p-1}}) \qquad \forall \, (x,y) \in \R^2_+
\end{equation}
and therefore, the boundedness of $u$ follows by combining the monotonicity of $u$, i.e., $\frac{\partial u}{\partial y} >0$  on $\R^2_+$, together with the estimate \eqref{stima}. Then, by standard elliptic estimates, we also get that $\vert \nabla u \vert $ is bounded and so we can apply Theorem 1.6 of our previous work \cite{FS0} to get that $u$ is one-dimensional. This concludes the proof.
\end{proof}

\begin{proof}[\underline{Proof of Corollary \ref{CorClas}}]
If $u$ is a solution to \eqref{pblA}, by the strong maximum principle we have either $u \equiv 0$ or $u>0$.
Then, by proceeding as in the proof of Theorem \ref{teo-sim} we get that  either $u \equiv 0$ or $u$ is a positive, bounded, one-dimensional and monotonically increasing function, say $u = u(y)$. The second case is impossible since $\bar l := \lim_{y\to +\infty} u(y)$ would be a positive zero of $f$, contradicting the assumption \eqref{posit}.  Therefore, if $u$ is a solution then necessarily $ u \equiv 0$ and so $f(0)=0$. This completes the proof.
\end{proof}

\begin{proof}[\underline{Proof of Theorem \ref{teo-clas2} }]
By assumption $f\ge0$ on $[0, +\infty)$ and so, either $u \equiv 0$ or $u>0$, thanks to the strong maximum principle. Since the linear function $ u(x,y) = cy$, $c \ge 0$, is harmonic, to conclude the proof of the first claim it is enough to show that $ u>0 \Longrightarrow   u(x,y) = cy$ for some $c>0$ (which in turn implies that $f \equiv 0$).
To this end we first observe that
\begin{equation}\label{v-pos}
v := \partial_y u > 0 \qquad {\textit {on}} \quad \overline{\R^2_+},
\end{equation}
indeed, $u >0$ on $\R^2_+$ implies $ v>0$ on $\R^2_+$ by Theorem \ref{teo-doppio} and $v>0$ on $\partial \R^2_+$ by Hopf's Lemma, since $u>0$  on $\R^2_+$ and $f(0) \ge 0$ by assumption.

Then we remark that, for every  $r>0$, for every $ p \in \overline{\R^2_+}$ and every
open ball $B_r(p)$,
\begin{equation}\label{u-H3}
u \in H^3(B_r(p) \cap \R^2_+)
\end{equation}
by standard elliptic regularity (see, for instance, Theorem 8.13 of \cite{GT}) and so we get that
\begin{equation}\label{v-H2}
v \in C^1(\overline{\R^2_+}), \qquad v\in H^2(B_r(p) \cap \R^2_+) \qquad \forall r>0, \quad \forall  p \in \overline{\R^2_+}.
\end{equation}
Now, since $f$ is locally Lipschitz continuous,  by differentiating the equation satisfied by $u$ and using \eqref{v-pos} and \eqref{v-H2} we obtain that $v$ satisfies
\begin{equation}
\begin{cases}
\quad v > 0& \text{everywhere in $\,\, {\overline {\R^2_+}}$},\\
-\Delta v = f'(u)v \ge 0 & \text{a.e. in $\,\, \R^2_+$}.
\end{cases}
\end{equation}
Then, for any $ \psi \in C^{0,1}_c(\R^2)$, we multiply the latter equation by $\psi^2 v^{-1}$ and integrate by parts to get
\[
0 \le - \int_{\R^2_+} \Delta v \psi^2 v^{-1} =
\]
\[
- \int_{\R^2_+} \frac{\vert \nabla v \vert^2}{v^2} \psi^2 + \int_{\R^2_+} 2 v^{-1} \psi \nabla v \nabla \psi + \int_{\partial \R^2_+} \frac{\partial v}{\partial y} v^{-1} \psi^2.
\]
Now we observe that $\frac{\partial v}{\partial y} = \frac{\partial^2 u}{\partial y^2} = \Delta u = -f(0) \le 0 $ on $\partial \R^2_+$, by the assumption \eqref{positzero}, and therefore  we deduce from the latter that
\begin{equation}
\int_{\R^2_+} \frac{\vert \nabla v \vert^2}{v^2} \psi^2 \le  \int_{\R^2_+} 2 v^{-1} \psi \nabla v \nabla \psi
\end{equation}
and then
\begin{equation}
\int_{\R^2_+} \frac{\vert \nabla v \vert^2}{v^2} \psi^2 \le  2 \Big [ \int_{\R^2_+} \frac{\vert \nabla v \vert^2}{v^2} \psi^2 \Big ]^{\frac{1}{2}} \Big [ \int_{\R^2_+} \vert \nabla  \psi \vert ^2 \Big ]^{\frac{1}{2}}
\end{equation}
which gives
\begin{equation}\label{v-Liouville}
\int_{\R^2_+} \frac{\vert \nabla v \vert^2}{v^2} \psi^2 \le  4 \int_{\R^2_+} \vert \nabla  \psi \vert ^2.
\end{equation}

Now, for every $R > 1$, consider the functions $\psi_R \in C^{0,1}_c(\R^2)$ given by

\[
\psi_R (x):= {\textbf 1}_{B_{\sqrt R}}(x)+
{{2\ln (R/\vert x \vert)} \over {\ln R}} {\textbf 1}_{B_R\setminus B_{\sqrt R}}(x), \qquad \forall x \in \R^2
\]
and used them into \eqref{v-Liouville} to get
\begin{equation}\label{v-Liouville2}
\int_{\R^2_+} \frac{\vert \nabla v \vert^2}{v^2} \psi_R^2 \le  4 \int_{\R^2_+} \vert \nabla  \psi_R \vert ^2 \le \frac{C}{\log R},
\end{equation}
where $C$ is a positive constant independent of $R$. By letting $ R \longrightarrow +\infty$ in \eqref{v-Liouville2} we find $\int_{\R^2_+} \frac{\vert \nabla v \vert^2}{v^2} =0$ and so $ v $ is a positive constant, say $  v \equiv c>0$. This means that $\partial_y u \equiv c >0$ and so $u(x,y) = cy $ for every $(x,y) \in \R^2_+$. Therefore $ 0 = - \Delta (cy) = f(cy) $ for every $y>0$ and thus $ f \equiv 0$, since $c>0$. This concludes the proof of the first claim.

In view of the  discussion above, if $ f \not \equiv 0$, the only solution of \eqref{pblB} is $ u \equiv 0$, which immediately implies item i) and item ii).


\end{proof}

\section{A counterexample} \label{preliminaries}

In this section we provide a counterexample showing that the conclusion of Theorem \ref{casteorem} (and of Theorem \ref{T:3spe}) fails if $f$ satisfies $(h_1),(h_2)$ but not $(h_3)$, i.e., the monotonicity property $\frac{\partial u}{\partial y} >0$ in $\R^2_+$, does not hold true if $f$ satisfies $(h_1),(h_2)$ but not $(h_3)$.
To this end we shall follow section 6 of \cite{Fa1}.  With the notation of example 6.3 of \cite{Fa1}, the function
\begin{equation}\label{esempio}
u(x,y) := \begin{cases}
u_1(x,y),& \quad y \le 2,\\
v(x,y -5), & \quad y > 2,
\end{cases}
\end{equation}
given by formula $(6.9)$ on p. 832 of \cite{Fa1}, with $s=0$ and $x_0 =(0,5) \in \R^2$, is a smooth entire solution of the equation $ -\Delta u = h(u)$ in $\R^2$, where $h$ is given by formula $(6.8)$ on p. 832 of \cite{Fa1}. Observe that $u$ is identically zero on the closed affine half-plane $\{ (x,y)  \in \R^2 : y \le -1 \} $ and positive on the open affine half-plane $\{ (x,y)  \in \R^2 : y > -1\},$ therefore the function $v(x,y) := u(x, y-1)$ is a solution of
\begin{equation}
\begin{cases}
-\Delta v = h(v)& \text{in $\,\, \R^2_+$},\\
v>0& \text{in $\,\, \R^2_+$},\\
v=0 & \text{on $\,\, \pa \R^2_+$},
\end{cases}
\end{equation}
which is neither monotone nor one-dimensional. On the other hand $h$ (extended to be equal to the constant $h(2) =0$ for $ t \ge 2$) is a function satisfying $(h_1),(h_2)$ but not $(h_3)$. Indeed, $h$ is Holder continuous on $[0, +\infty)$ and so it satisfies $(h_1)$. Moreover, $h$ fulfills $(h_2)$ since it is non increasing in a neighbourhood of the points $0, 1$ and $2$ and smooth on $[0, +\infty) \setminus \{0,1,2\}$. Finally, let us prove that $h$ does not satisfy assumption $(h_3)$ at ${\bar t}=0$. To this end we first observe that $h(t) =  -192[t(1-t^{\frac{1}{4}})]^{\frac{1}{2}} [1 - {\frac{5}{4}} t^{\frac{1}{4}}]$ for $ t \in [0,1]$ (cf. example 6.1. of \cite{Fa1}) and we suppose, for contradiction, that $h$ satisfies $(h_3)$ at ${\bar t}=0$. Hence, there exists a function $g$ such that
\begin{equation}\label{prop-conf-forte-controes}
h(s)-h(t)\leq g(t-s)\,\qquad \forall \,s\leq t\in [0 \,,\, \frac{\delta}{2}]
\end{equation}
for some $\delta \in (0,1)$ and fulfilling the integral condition \eqref{cond-int-PM}. By choosing $s=0$ in \eqref{prop-conf-forte-controes}, we have
\begin{equation}\label{prop-conf-forte-controes2}
-h(t)\leq g(t)\,\qquad \forall \,  t\in [0 \,,\, \frac{\delta}{2}]
\end{equation}
and, in view of the explicite form of $h$ near zero, we can find $ \eta \in (0, \frac{\delta}{2}),$ small enough, such that
\begin{equation}\label{prop-conf-forte-controes3}
g(t) \ge - h(t) = \vert h(t) \vert \ge \gamma t^{\frac{1}{2}} \,\qquad \forall \,  t\in [0 \,,\, \eta]
\end{equation}
for some $ \gamma >0$.
The latter yields $ G(s) \ge {\frac{2}{3}} \gamma s^{\frac{3}{2}}$ in $ [0, \eta]$ and so $\int_0^\eta\frac{ds}{\sqrt{G(s)}} < \infty$, contradicting \eqref{cond-int-PM}. So, assumption $(h_3)$ is not satisfied at $ {\bar t} =0$ (also note that a similar argument shows that $h$ does not satisfy $(h_3)$ neither at $1$ nor at $2$). Clearly, the same example can be used as a counterexample for Theorem \ref{T:3spe}.

\end{document}